\numberwithin{equation}{section}
\theoremstyle{definition}
\numberwithin{equation}{section}
\newtheorem{theorem}{\bf Theorem}[section]
\newtheorem{remark}{\bf Remark}[section]
\newtheorem{proposition}{Proposition}[section]
\newtheorem{definition}{Definition}[section]
\newtheoremstyle
{remarkstyle}
{}
{11pt}
{}
{}
{\bfseries}
{:}
{     }
{\thmname{#1} \thmnumber{#2} }
\theoremstyle{remarkstyle}
\begin{document}
	\title{Fractional Compound Poisson Random Fields on Plane}
	 \author[P. Vishwakarma]{Pradeep Vishwakarma}
	 \address{P. Vishwakarma, Department of Mathematics,
	 	Indian Institute of Technology Bhilai, Durg-491002, India.}
	 \email{pradeepv@iitbhilai.ac.in}
\author[K. K. Kataria]{Kuldeep Kumar Kataria}
	 \address{K. K. Kataria, Department of Mathematics,
		 	Indian Institute of Technology Bhilai, Durg-491002, India.}
	 \email{kuldeepk@iitbhilai.ac.in}

	\subjclass[2010]{Primary : 60G50; Secondary: 60G51, 60G60}
	
	\keywords{compound Poisson random field, Brownian sheet, two parameter stable subordinator, Caputo fractional derivative}
	\date{\today}
	
	\maketitle
	\begin{abstract}
		We study a compound Poisson random field on plane and examine its various fractional variants. We derive the distributions of these random fields and in some particular cases, obtain their associated system of governing differential equations. Additionally, various weak convergence results are derived. Also, we introduce and study some fractional variants of the Poisson random field (PRF) that includes the space fractional and the space-time fractional Poisson random fields. We discuss various distributional properties for the fractional PRFs and obtain their time-changed representations. Lastly, we define and study some  fractional compound Poisson random fields using the fractional variants of PRF.
	\end{abstract}

\section{Introduction}
The compound Poisson process generalizes the standard Poisson process by permitting random jump sizes instead of fixed unit-sized jumps. This model finds use in a wide range of fields, including insurance (see \cite{Dickson2016}), evolutionary biology (see \cite{Huelsenbeck2000}), \textit{etc}. For more information on fractional generalizations of the compound Poisson process, we refer the reader to \cite{Beghin2012}, \cite{Khandakar2023}, and the references cited therein.

The Poisson random field (PRF) or spatial Poisson point process describes the random distribution of points within a space. It serves as a key statistical model for analyzing spatial point patterns, where each point corresponds to the location of a specific object. It has application across various fields, including astronomy, ecology, physics, telecommunications, \textit{etc}.

Let $\mathbb{R}_+$ denote the set of non-negative real numbers and  $\mathbb{T}=[0,T_1]\times[0,T_2]$ be a rectangle in $\mathbb{R}^2_+$.
A collection of random variable $\{X(t_1,t_2),\ (t_1,t_2)\in\mathbb{T}\}$ is called a two parameter random process.

For $(s_1,s_2)\preceq (t_1,t_2)$, that is, $s_1\leq t_1$ and $s_2\leq t_2$, the rectangular increment of two parameter random process over rectangle $(s_1,t_1]\times (s_2,t_2]$ is defined as 
\begin{equation*}
	X((s_1,t_1]\times(s_2,t_2])=\Delta_{s_1,s_2}X(t_1,t_2)\coloneqq X(t_1,t_2)-X(s_1,t_2)-X(t_1,s_2)+X(s_1,s_2).
\end{equation*}

Let $(\Omega,\mathcal{F},P)$ be a complete probability space and $\{\mathcal{F}_{t_1,t_2},(t_1,t_2)\in\mathbb{T}\}$ be a collection of sub-sigma algebras of $\mathcal{F}$ such that\\
\noindent (i) $\mathcal{F}_{0,0}$ contains all the null sets in $\mathcal{F}$,\\
\noindent (ii) $\mathcal{F}_{s_1,s_2}\subseteq \mathcal{F}_{t_1,t_2}$ whenever $s_1\leq t_1$ and $s_2\leq t_2$,\\
\noindent (iii) for all $r\in\mathbb{T}$, $\mathcal{F}_r=\cap_{r\prec r'}\mathcal{F}_{r'}$, where $r=(s_1,s_2)\prec r'=(t_1,t_2)$ denotes the partial ordering on $\mathbb{R}^2_+$, that is, $r\prec r'$ implies $s_1<t_1$ and $s_2<t_2$.

The PRF $\{N(t_1,t_2), (t_1,t_2)\in\mathbb{R}^2_+\}$ on plane denoted by
$
N(t_1,t_2)\coloneqq N([0,t_1]\times[0,t_2])$, $ (t_1,t_2)\in\mathbb{R}_+^2
$ is a point process that represent the total number of random points inside the rectangle $[0,t_1]\times[0,t_2]$. 
It has the following characterization (see \cite{Baddeley2007}, \cite{Merzbach1986}): 

The PRF $\{N(t_1,t_2),\ (t_1,t_2)\in\mathbb{R}^2_+\}$ is a c\`adl\`ag field adapted to $\{\mathcal{F}_{t_1,t_2}, (t_1,t_2)\in\mathbb{R}^2_+\}$ such that $N(0,t_2)=N(t_1,0)=0$ with probability one, the increment $N((s_1,t_1]\times(s_2,t_2])$ is a Poisson random variable with mean $\lambda(t_1-s_1)(t_2-s_2)$. 
In particular, for $s_1=s_2=0$, the distribution $p(n,t_1,t_2)=\mathrm{Pr}(N(t_1,t_2)=n)$ of PRF is given by
\begin{equation}\label{prfdist}
	p(n,t_1,t_2)=\frac{e^{-\lambda t_1t_2}(\lambda t_1t_2)^n}{n!},\ n\ge0.
\end{equation}
Moreover, for disjoint rectangles $R_1,R_2,\dots,R_m$ in $\mathbb{R}^2_+$ like $(s_1,t_1]\times (s_2,t_2]$, the random variables $N(R_1),N(R_2)$, $\dots,N(R_m)$ are independent of each other. 

From Eq. (19) of \cite{Kataria2024}, the distribution of PRF solves the following system of differential equations:
\begin{equation}\label{prfeqs}
	\frac{\partial^2}{\partial t_2\partial t_1}p(n,t_1,t_2)=(n+1)\lambda p(n+1,t_1,t_2)-(2n+1)\lambda p(n,t_1,t_2)+n\lambda p(n-1,t_1,t_2),\ n\ge0,
\end{equation}
with $p(0,0,0)=p(0,t_1,0)=p(0,0,t_2)=1$ for all $t_1\ge0$ and $t_2\ge0$.

In \cite{Kataria2024}, a time fractional variants of the PRF, namely, the fractional Poisson random field (FPRF) $\{N_{\alpha_1,\alpha_2}(t_1,t_2),\ (t_1,t_2)\in\mathbb{R}^2_+\}$, $0<\alpha_i\leq1$, $i=1,2$ is introduced and studied. Its distribution $p_{\alpha_1,\alpha_2}(n,t_1,t_2)=\mathrm{Pr}\{N_{\alpha_1,\alpha_2}(t_1,t_2)=n\}$, $n\ge0$ solves
	\begin{equation*}
		\frac{\partial^{\alpha_1+\alpha_2}}{\partial t_2^{\alpha_2}\partial t_1^{\alpha_1}}p_{\alpha_1,\alpha_2}(n,t_1,t_2)=(n+1)\lambda p_{\alpha_1,\alpha_2}(n+1,t_1,t_2)-(2n+1)\lambda p_{\alpha_1,\alpha_2}(n,t_1,t_2)+n\lambda p_{\alpha_1,\alpha_2}(n-1,t_1,t_2),
	\end{equation*}
	with $p_{\alpha_1,\alpha_2}(0,0,0)=p_{\alpha_1,\alpha_2}(0,t_1,0)=p_{\alpha_1,\alpha_2}(0,0,t_2)=1$ for all $t_1\ge0$ and $t_2\ge0$.
	
	Some fractional variants of the PRF are studied in \cite{Vishwakarma2025a}. In \cite{Kataria2024}, a fractional compound Poisson random field on $\mathbb{R}^2_+$, that is,
	$
		\sum_{j=1}^{N_{\alpha_1,\alpha_2}(t_1,t_2)}Y_j,\ (t_1,t_2)\in\mathbb{R}^2_+
$
	is introduced where $Y_j$'s are independent and identically distributed random variables which are independent of the FPRF. In particular, for $\alpha_1=\alpha_2=1$, it reduces to the following compound Poisson random field:
	$
		\sum_{j=1}^{N(t_1,t_2)}Y_j$, $ (t_1,t_2)\in\mathbb{R}^2_+.
	$

In this paper, we study some compound Poisson random fields and analyze the asymptotic distribution of their scaled versions. The outline of the paper is as follows:

In the next section, we collect some known results that will be useful in the paper. In Section \ref{sec3}, we consider the compound Poisson random field (CPRF) on plane and study some of its distributional properties. In particular, we analyzed the asymptotic distributions for some scaled versions of CPRF with normal compounding. In Section \ref{sec4}, we study some particular cases of the time fractional CPRF, namely, with normal, exponential and Mittag-Leffler compounding random variables. We obtain their distributions and the associated governing differential equations. In the case of normal compounding some limiting results are discussed. In Section \ref{sec5}, we introduce and study the space fractional and the space-time fractional variants of PRF. The time-changed representations of these processes in terms of one parameter Poisson process are derived. Moreover, we introduce some different fractional CPRFs using the fractional variants of PRF. Finally, we summarize the paper with the scope of future direction in the concluding remark section. 
\section{Preliminaries}\label{pre} 
Here, we collect some known results that will be used later. First, we recall the definition of two parameter L\'evy process (see \cite{Vares1982}).
\begin{definition}(\textbf{Two parameter L\'evy process})
A real valued two parameter random process $\{X(t_1,t_2),\ (t_1,t_2)\in\mathbb{R}^2_+\}$ is called L\'evy process if \\
\noindent (i) $X(0,t_2)=X(t_1,0)=0$ almost surely,\\
\noindent (ii) for fix $(h,k)\in\mathbb{R}^2_+$,  the distribution of $\Delta_{t_1,t_2}X(t_1+h,t_2+k)$ is independent of the choice of $(t_1,t_2)$. This is known as the stationary increment property,\\
\noindent (iii) for disjoint rectangles $R_1,R_2,\dots,R_m$ in $\mathbb{R}^2_+$ of type $(s_1,t_1]\times (s_2,t_2]$, the random variables $X(R_1),X(R_2)$, $\dots,X(R_m)$ are independent of each other. This property is known as the independent increments property.
\end{definition}
The characteristic function of the two parameter L\'evy process  is given by $\mathbb{E}e^{i\langle\eta, X(t_1,t_2)\rangle}=e^{-t_1t_2\Psi(\eta)}$, $\eta\in\mathbb{R}$. Here, $\Psi$ is the L\'evy exponent of $\{X(t_1,t_2),\ (t_1,t_2)\in\mathbb{R}^2_+\}$ given by
\begin{equation*}
	\Psi(\eta)=ia\eta+\frac{b\eta^2}{2}+\int_{\mathbb{R}^d-\{0\}}(1+i\eta w\textbf{1}_{|w|\leq1}-e^{i\eta w})\phi(\mathrm{d}w),
\end{equation*}
for some constants $a\in{R}$ and $b\ge0$, where $\phi$ is the L\'evy measure on $\mathbb{R}$, that is, 
$
	\phi(-\infty,0)=0$ and $\int_{0}^{\infty}\min\{|x|,1\}\phi(\mathrm{d}w)<\infty$.

\subsection{Two parameter stable subordinator}\label{tpsub} Let $\{H_\alpha(t_1,t_2),\ (t_1,t_2)\in\mathbb{R}^2_+\}$, $0<\alpha<1$ be a two parameter L\'evy process such that for any $(t_1,t_2)\preceq(t_1',t_2')$ in $\mathbb{R}^2_+$, we have $H_\alpha(t_1,t_2)\leq H_\alpha(t_1',t_2')$. It is called a two parameter stable subordinator of index $\alpha$ if its Laplace transform is given by
\begin{equation}\label{tpsublap}
\mathbb{E}e^{-uH_\alpha(t_1,t_2)}=e^{-t_1t_2u^\alpha},\ u>0,\, (t_1,t_2)\in\mathbb{R}^2_+.
\end{equation}

\subsection{Inverse stable subordinator}\label{sub} A one parameter stable subordinator $\{H_\alpha(t),\ t\ge0\}$ is a non-decreasing L\'evy process whose Laplace transform is given by
\begin{equation}\label{sublap}
	\mathbb{E}e^{-uH_\alpha(t)}=e^{-tu^\alpha},\ u>0.
\end{equation}
Its first passage time process $\{L_\alpha(t),\ t\ge0\}$, that is,
\begin{equation}\label{insub}
	L_\alpha(t)\coloneqq\inf\{s\ge0:H_\alpha(s)\ge t\}
\end{equation}
is called the inverse $\alpha$-stable subordinator.
Its density has the following Laplace transform:
\begin{equation}\label{insublap}
	\int_{0}^{\infty}e^{-zt}\mathrm{Pr}\{L_\alpha(t)\in\mathrm{d}x\}=z^{\alpha-1}e^{-z^\alpha x}\, z>0.
\end{equation}
\subsection{Some special functions}
\subsubsection{Mittag-Leffler function} For $\alpha>0$, $\beta\in\mathbb{R}$ and $\gamma\in\mathbb{R}$, the three parameter Mittag-Leffler function is defined as follows (see \cite{Kilbas2006}):
\begin{equation}\label{tpml}
	E_{\alpha,\beta}^\gamma(x)\coloneqq\sum_{k=0}^{\infty}\frac{\gamma^{(k)}x^k}{\Gamma(\alpha k+\beta)k!},\ x\in\mathbb{R},
\end{equation}
where $\gamma^{(k)}=\gamma(\gamma+1)\dots(\gamma+k-1)$.

The following result will be used (see \cite{Kilbas2006}):
\begin{equation}\label{tpmllap}
	\int_{0}^{\infty}e^{-zt}t^{\beta-1}E^{\gamma}_{\alpha,\beta}(c t^\alpha)\,\mathrm{d}t=\frac{z^{\alpha\gamma-\beta}}{(z^\alpha-c)^\gamma},\ |cz^{-\alpha}|<1,\ c\in\mathbb{R}.
\end{equation}

For $\beta=\gamma=1$, (\ref{tpml}) reduces to the one parameter Mittag-Leffler function defined by
$	E_{\alpha,1}(x)
$.
Its Laplace transform is given by
\begin{equation}\label{opmllap}
	\int_{0}^{\infty}e^{-zt}E_{\alpha,1}(ct)=\frac{z^{\alpha-1}}{z^\alpha-c},\ |cz^{-\alpha}|<1,\ c\in\mathbb{R}.
\end{equation}

\subsubsection{Generalized Wright function}For real numbers $a_i$'s, $b_j$'s and non zero real numbers $\alpha_i$'s, $\beta_j$'s, $i\in\{1,2,\dots,m\}$, $j\in\{1,2,\dots,l\}$, the generalized Wright function is defined as follows (see \cite{Kilbas2006}, Eq. (1.11.14)):
\begin{align}\label{genwrit}
	{}_m\Psi_l\left[\begin{matrix}
		(a_i,\alpha_i)_{1,m}\\\\
		(b_j,\beta_j)_{1,l}
	\end{matrix}\Bigg| x \right]&={}_m\Psi_l\left[\begin{matrix}
		(a_1,\alpha_1),\,(a_2,\alpha_2),\dots,(a_m,\alpha_m)\\\\
		(b_1,\beta_1),\,(b_2,\beta_2),\dots,(b_l,\beta_l)
	\end{matrix}\Bigg| x \right]\nonumber\\
	&\coloneqq\sum_{n=0}^{\infty}\frac{\prod_{i=1}^{m}\Gamma(a_i+n\alpha_i)x^n}{\prod_{j=1}^{l}\Gamma(b_j+n\beta_j)n!},\ x\in\mathbb{R}.
\end{align}
\subsection{Caputo fractional derivative} For an appropriate function $g(\cdot)$, its Caputo fractional derivative of order $\alpha\in(0,1]$ is defined as follows (see \cite{Kilbas2006}):
\begin{equation}\label{caputoder}
	\frac{\mathrm{d}^\alpha}{\mathrm{d}t^\alpha}g(t)=\begin{cases}
		\frac{1}{\Gamma(1-\alpha)}\int_{0}^{t}\frac{f'(s)}{(t-s)^{\alpha}}\,\mathrm{d}s,\ 0<\alpha<1,\\
		f'(t),\ \alpha=1.
	\end{cases}
\end{equation}
Its Laplace transform is given by
\begin{equation}\label{caputoderlap}
	\int_{0}^{\infty}e^{-zt}\frac{\mathrm{d}^\alpha}{\mathrm{d}t^\alpha}g(t)\,\mathrm{d}t=z^{\alpha}\int_{0}^{\infty}e^{-zt}g(t)\,\mathrm{d}t-z^{\alpha-1}g(0^+),\ z>0.
\end{equation}
\section{Compound Poisson random field}\label{sec3}
 Let $\{Y_j\}_{j\ge1}$ be a sequence of independent and identically distributed (iid) random variables, and $S_n=\sum_{j=1}^{n}Y_j$, $n\ge0$ with $S_0=0$. Also, let $\{N(t_1,t_2),\ (t_1,t_2)\in\mathbb{R}^2_+\}$ be a two parameter Poisson random field with parameter $\lambda>0$ such that it is independent of $\{Y_j\}_{j\ge1}$. 
 
 We consider a two parameter random process $\{X(t_1,t_2),\ (t_1,t_2)\in\mathbb{R}^2_+\}$, namely, the compound Poisson random field (CPRF) defined as follows:
\begin{equation}\label{cprf}
	X(t_1,t_2)\coloneqq S_{N(t_1,t_2)}=\sum_{j=1}^{N(t_1,t_2)}Y_j,\ (t_1,t_2)\in\mathbb{R}^2_+.
\end{equation} 

Let $F(\cdot)$ be the common distribution function of $Y_j$'s. Then, the distribution function of CPRF is given by (see \cite{Kataria2024})
\begin{equation*}
	\mathrm{Pr}\{X(t_1,t_2)\leq x\}=\Theta(x)+\sum_{n=1}^{\infty}\frac{(-\lambda t_1t_2)^n}{n!}\bigg(\Theta(x)+\sum_{k=1}^{n}\binom{n}{k}(-1)^kF^{*k}(x)\bigg),\ x\in\mathbb{R},
\end{equation*}
where $\Theta(\cdot)$ is the Heaviside function and $F^{*k}$ denotes the $k$-fold convolution of $F$.
For a fractional variant of the CPRF, we refer the reader to \cite{Kataria2024}.

Next, we show that the CPRF has stationary and independent increments property.
\begin{proposition}\label{prop}
	The CPRF vanishes on axes almost surely, and it has stationary and independent rectangular increments.
\end{proposition}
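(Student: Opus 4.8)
The plan is to dispose of the behavior on the axes first and then establish the two increment properties through the compound–sum structure of the CPRF together with the corresponding properties of the underlying PRF. Since the characterization of the PRF gives $N(0,t_2)=N(t_1,0)=0$ with probability one, and $S_0=0$ by convention, we immediately obtain $X(0,t_2)=S_{N(0,t_2)}=S_0=0$ and $X(t_1,0)=S_0=0$ almost surely, which settles the vanishing on the axes. For the increments, the key preliminary step is to identify the rectangular increment of $X$ over $R=(s_1,t_1]\times(s_2,t_2]$ with a compound sum driven by the Poisson count in $R$. Writing $N(R)=\Delta_{s_1,s_2}N(t_1,t_2)$, which by the defining property of the PRF is Poisson with mean $\lambda(t_1-s_1)(t_2-s_2)$, I would show that
\[
\Delta_{s_1,s_2}X(t_1,t_2)\stackrel{d}{=}\sum_{j=1}^{N(R)}\widehat Y_j,
\]
where the $\widehat Y_j$ are iid with common distribution $F$ and independent of $N(R)$. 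Conceptually this is because, in the marked–point–process picture, $X$ accumulates exactly one mark $Y$ per Poisson point, so a rectangular increment collects precisely the marks attached to the points of $R$; as the marks are iid and independent of the field, only the number $N(R)$ of such points matters.

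With this representation in hand, the stationarity of the increments follows from a characteristic–function computation. Conditioning on $N(R)$ and using the independence of the marks gives
\[
\mathbb{E}\exp\{i\xi\,\Delta_{s_1,s_2}X(t_1,t_2)\}=\mathbb{E}\bigl(\phi_Y(\xi)\bigr)^{N(R)}=\exp\{\lambda(t_1-s_1)(t_2-s_2)(\phi_Y(\xi)-1)\},
\]
where $\phi_Y(\xi)=\mathbb{E}e^{i\xi Y_1}$. Since the right-hand side depends on the rectangle only through the side lengths $t_1-s_1$ and $t_2-s_2$, the law of $\Delta_{t_1,t_2}X(t_1+h,t_2+k)$ depends only on $(h,k)$ and not on the base point $(t_1,t_2)$, which is exactly the stationary–increment property in the sense of property (ii) of the two parameter L\'evy process. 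For independence I would fix disjoint rectangles $R_1,\dots,R_m$ of the stated type and condition on the count vector $(N(R_1),\dots,N(R_m))$. Given these counts, each increment $\Delta X(R_i)$ is a sum of marks attached to the distinct points of $R_i$, hence the increments are conditionally independent; taking expectations and invoking the fact that $N(R_1),\dots,N(R_m)$ are themselves independent (a defining property of the PRF) shows that the joint characteristic function factorizes as $\prod_{i=1}^{m}\exp\{\lambda|R_i|(\phi_Y(\xi_i)-1)\}$, with $|R_i|$ the area of $R_i$, which yields independence of the rectangular increments.

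The main obstacle is the rigorous justification of the compound–sum representation in the first step. The field is defined through the single linearly indexed sequence $S_{N(t_1,t_2)}$, so one must verify that the rectangular difference $S_{N(t_1,t_2)}-S_{N(s_1,t_2)}-S_{N(t_1,s_2)}+S_{N(s_1,s_2)}$ genuinely reduces, in distribution, to a clean compound sum over the points of $R$, rather than to a signed combination of overlapping partial sums coming from the four corner counts. I expect this to require passing to the marked–point–process construction, in which the iid marks are attached to the Poisson points themselves rather than to a fixed linear order, so that the four corner counts are realized consistently as counts over the four sub-rectangles of $[0,t_1]\times[0,t_2]$ determined by the lines $x=s_1$ and $y=s_2$. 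Once the increment is expressed through the independent sub-rectangle counts, the iid property of the marks renders the stationarity and independence computations routine.
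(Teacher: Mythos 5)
Your treatment of the axes and your two characteristic--function computations coincide in substance with the paper's proof: the paper also reduces everything to the representation $\Delta_{s_1,s_2}X(t_1,t_2)\overset{d}{=}\sum_{j=1}^{N(R)}Y_j$ over $R=(s_1,t_1]\times(s_2,t_2]$ (stationarity then being inherited from the PRF), and it proves independence exactly as you do, by observing that disjoint rectangles carry disjoint, hence independent, collections of marks. The one step you decline to carry out --- and flag as the main obstacle --- is the only place where the paper does something you do not: it attempts to derive the compound--sum representation directly from the linearly indexed definition $X=S_{N(t_1,t_2)}$ by conditioning on the four corner counts $(n_1,n_2,n_3,n_4)$, writing the signed combination as a difference of sums over disjoint index blocks, e.g. $\sum_{j=n_2+1}^{n_1}Y_j-\sum_{j=n_4+1}^{n_3}Y_j$ when $n_2\ge n_3$, and then collapsing this difference into the single sum $S_{n_1-n_2-n_3+n_4}$.

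Your hesitation about that reduction is exactly right, because the collapsing step is false: it asserts that for independent sums $\mathrm{Pr}\{S_a-S_b'\in\mathrm{d}x\}=\mathrm{Pr}\{S_{a-b}\in\mathrm{d}x\}$ (with $a=n_1-n_2$, $b=n_3-n_4$), and comparing characteristic functions, $\phi_Y^{a}(\overline{\phi_Y})^{b}=\phi_Y^{a-b}$ forces $|\phi_Y|\equiv1$, i.e.\ degenerate marks. Concretely, take exponential marks and corner counts $(2,1,1,0)$: the rectangle $R$ then contains no points of the field, yet $S_2-S_1-S_1+S_0=Y_2-Y_1\neq0$. Carrying the conditioning through, the rectangular increment of $S_{N(t_1,t_2)}$ has characteristic function $\mathbb{E}\,\phi_Y(\xi)^{N(R)}\cdot\mathbb{E}\,|\phi_Y(\xi)|^{2(N_B\wedge N_C)}$, where $N_B$ and $N_C$ are the counts in the flanking rectangles $(s_1,t_1]\times[0,s_2]$ and $[0,s_1]\times(s_2,t_2]$; their means depend on the base point $(s_1,s_2)$, so for nondegenerate marks the increments of the linearly indexed field are not even stationary. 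The proposition is therefore true only under the marked--point--process reading you propose, in which each mark is attached to a point of the field so that the four corner sums decompose consistently over the four sub-rectangles. The consequence for your write-up is that the marked construction is not a deferrable technicality: it is the actual content of the proposition, and without it (or an equivalent consistency statement replacing the definition $X=S_{N(t_1,t_2)}$) neither your argument nor the paper's is complete. Completing your plan --- construct the marked field, verify its marginals agree with $S_{N(t_1,t_2)}$, then run your two computations --- would produce a proof that is sound precisely where the paper's is not.
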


\begin{proof}
	As $N(0,t_2)=N(t_1,0)=0$ almost surely, we have $X(0,t_2)=X(t_1,0)=0$ almost surely. Now, for $(h,k)\in\mathbb{R}^2_+$ and $(t_1,t_2)\in\mathbb{R}^2_+$, it is sufficient to show that the distribution of $\Delta_{t_1,t_2}X(t_1+h,t_2+k)$ is not a function of $(t_1,t_2)$. 	
	Let $f(\cdot;\Delta_{s_1,s_2}X(t_1,t_2))$ and $f(\cdot;S_n)$  be the densities of $\Delta_{s_1,s_2}X(t_1,t_2)$ and $S_n$, respectively, and let $p(n_1,n_2,n_3,n_4)=\mathrm{Pr}\{N(s+h,t+k)=n_1,N(s+h,t)=n_2,N(s,t+k)=n_3,N(s,t)=n_4\}$, $(n_1,n_2,n_3,n_4)\in\Theta(n_1,n_2,n_3,n_4)$, where $\Theta(n_1,n_2,n_3,n_4)=\{(n_1,n_2,n_3,n_4)\in\mathbb{N}_0^4:n_1\ge n_j, j=2,3,4,\ n_2\ge n_4,\ \text{and}\ n_3\ge n_4\}$. Then, we have
	{\small\begin{align*}	f(x;\Delta_{s,t}X(s+h,t+k))\mathrm{d}x&=\sum_{\Theta(n_1,n_2,n_3,n_4)}p(n_1,n_2,n_3,n_4)\mathrm{Pr}\biggl\{\bigg(\sum_{j=1}^{n_1}Y_j-\sum_{j=1}^{n_2}Y_j-\sum_{j=1}^{n_3}Y_j+\sum_{j=1}^{n_4}Y_j\bigg)\in\mathrm{d}x\biggr\}\\
		&=\sum_{\substack{\Theta(n_1,n_2,n_3,n_4)\\n_2\ge n_3}}p(n_1,n_2,n_3,n_4)\mathrm{Pr}\bigg\{\bigg(\sum_{j=n_2+1}^{n_1}Y_j-\sum_{j=n_4+1}^{n_3}Y_j\bigg)\in\mathrm{d}x\bigg\}\\
		&\ \ +\sum_{\substack{\Theta(n_1,n_2,n_3,n_4)\\n_2<n_3}}p(n_1,n_2,n_3,n_4)\mathrm{Pr}\biggl\{\bigg(\sum_{j=n_3+1}^{n_1}Y_j-\sum_{j=n_4+1}^{n_2}Y_j\bigg)\in\mathrm{d}x\biggr\}\\
		&=\sum_{\substack{\Theta(n_1,n_2,n_3,n_4)\\n_2\ge n_3}}p(n_1,n_2,n_3,n_4)\mathrm{d}x\int_{-\infty}^{\infty}f(x+y;S_{n_1-n_2})f(y;S_{n_3-n_4})\,\mathrm{d}y\\
		&\ \ +\sum_{\substack{\Theta(n_1,n_2,n_3,n_4)\\n_2<n_3}}p(n_1,n_2,n_3,n_4)\mathrm{d}x\int_{-\infty}^{\infty}f(x+y;S_{n_1-n_3})f(y;S_{n_2-n_4})\,\mathrm{d}y\\
		&=\sum_{\substack{\Theta(n_1,n_2,n_3,n_4)\\n_2\ge n_3}}p(n_1,n_2,n_3,n_4)\mathrm{Pr}\{S_{n_1-n_2}-S_{n_3-n_4}\in\mathrm{d}x\}\\
		&\ \ +\sum_{\substack{\Theta(n_1,n_2,n_3,n_4)\\n_2<n_3}}p(n_1,n_2,n_3,n_4)\mathrm{Pr}\{S_{n_1-n_3}-S_{n_2-n_4}\in\mathrm{d}x\}\\
		&=\sum_{\substack{\Theta(n_1,n_2,n_3,n_4)\\n_2\ge n_3}}p(n_1,n_2,n_3,n_4)\mathrm{Pr}\{S_{n_1-n_2-n_3+n_4}\in\mathrm{d}x\}\\
		&\ \ +\sum_{\substack{\Theta(n_1,n_2,n_3,n_4)\\n_2<n_3}}p(n_1,n_2,n_3,n_4)\mathrm{Pr}\{S_{n_1-n_3-n_2+n_4}\in\mathrm{d}x\}\\
		&=\sum_{\Theta(n_1,n_2,n_3,n_4)}p(n_1,n_2,n_3,n_4)\mathrm{Pr}\{S_{n_1-n_2-n_3+n_4}\in\mathrm{d}x\}\\
		&=\mathrm{Pr}\bigg(\sum_{j=1}^{\Delta_{s,t}N(s+h,t+k)}Y_j\in\mathrm{d}x\bigg).
	\end{align*}}
	Thus, the stationary increments property of the CPRF follows from the stationary increments property of the Poisson random field.	
	To establish its independent increments property, we proceed as follows:
	For $j=1,2,\dots,m$, consider the disjoint rectangles $(s_j,t_j]\times(s_{j+1},t_{j+1}]$ in $\mathbb{R}^2_+$. Then, the increments over these rectangles are given by
	\begin{equation}\label{statinpf}
		\Delta_{s_j,t_j}X(s_{j+1},t_{j+1})=\sum_{l=1}^{N(s_{j+1},t_{j+1})}Y_l-\sum_{l=1}^{N(s_{j},t_{j+1})}Y_l-\sum_{l=1}^{N(s_{j+1},t_{j})}Y_l+\sum_{l=1}^{N(s_{j},t_{j})}Y_l.
	\end{equation}
	 Note that the indices $l=1,2,\dots$ of $Y_l$'s in  (\ref{statinpf}) refer to the points in disjoint rectangles $(s_j,t_j]\times(s_{j+1},t_{j+1}]$, $j\in\{1,2,\dots,m\}$. Thus, the independent increments property of CPRF follows from the independence of $Y_l$'s and independent increments property of the PRF. This completes the proof. 
\end{proof}

\begin{remark}
	Let $C_b(\mathbb{R})$ be the space of continuous and bounded function on $\mathbb{R}$. For $f\in C_b(\mathbb{R})$, let us consider the operator
	$
		T_{t_1,t_2}f(x)=\mathbb{E}f(x+X(t_1,t_2))
	$, $(t_1,t_2)\in\mathbb{R}^2_+$ 
	where $\{X(t_1,t_2),\ (t_1,t_2)\in\mathbb{R}^2_+\}$ is the CPRF. Then, for any $s_1\ge0$, we have
	\begin{align*}
		T_{t_1+s_1,t_2}f(x)&=\mathbb{E}f(x+X(t_1+s_1,t_2))\\
		&=\mathbb{E}f(x+\Delta_{t_1,0}X(t_1+s_1,t_2)+X(t_1,t_2))\\
		&=\mathbb{E}f(x+X(s_1,t_2)+X(t_1,t_2))\\
		&=T_{t_1,t_2}T_{s_1,t_2}f(x),
	\end{align*}
	where we have used the stationary increment property of CPRF to get the penultimate step. Similarly, it can be shown that $T_{t_1,t_2+s_2}f(x)=T_{t_1,t_2}T_{t_1,s_2}f(x)$ for all $s_2\ge0$. Thus, $T_{t_1,t_2}$ is  a coordinatewise semigroup operator.
\end{remark}

\subsection{CPRF with normal compounding} 
Let the sequence $\{Y_j\}_{j\ge1}$ in (\ref{cprf}) be the sequence of standard normal random variables, say, $\{Z_j\}_{j\ge1}$, and the rate of $\{N(t_1,t_2),\ (t_1,t_2)\in\mathbb{R}^2_+\}$ be $1$. Then, we call the CPRF as the normal CPRF and denote it by $\{\mathscr{X}(t_1,t_2),\ (t_1,t_2)\in\mathbb{R}^2_+\}$. Thus, $\mathscr{X}(t_1,t_2)=\sum_{j=1}^{N(t_1,t_2)}Z_j$. 

In view of Proposition \ref{prop}, for $(s_1,t_1)\prec(s_2,t_2)$ in $\mathbb{R}^2_+$, the characteristic function of $\Delta_{s_1,t_1}X(s_2,t_2)$ is given by
\begin{align}
	\mathbb{E}e^{iu\Delta_{s_1,t_1}\mathscr{X}(s_2,t_2)}&=\mathbb{E}\bigg(\mathbb{E}\bigg(\exp\bigg(iu\sum_{j=1}^{\Delta_{s_1,t_1}N(s_2,t_2)}Z_j\bigg)\bigg|\Delta_{s_1,t_1}N(s_2,t_2)\bigg)\bigg)\nonumber\\
	&=\mathbb{E}\exp\bigg(-\frac{u^2\Delta_{s_1,t_1}N(s_2,t_2)}{2}\bigg)\nonumber\\
	&=\exp\big((s_2-s_1)(t_2-t_1)(e^{-u^2/2}-1)\big),\ u\in\mathbb{R}.\label{chdx}
\end{align}

The following result provides the auto covariance function of the normal CPRF.
\begin{proposition}\label{prop1}
	For $(s_1,t_1)$ and $(s_2,t_2)$ in $\mathbb{R}^2_+$, we have the following joint moment:
	\begin{equation}\label{jchdx}
		\mathbb{E}\mathscr{X}(s_1,t_1)\mathscr{X}(s_2,t_2)=(s_1\wedge s_2)(t_1\wedge t_2),
	\end{equation}
	where $s\wedge t$ denotes the minimum of $s$ and $t$.
\end{proposition}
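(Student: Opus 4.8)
The plan is to exploit that the standard normal marks are centered, so $\mathbb{E}\mathscr{X}(s,t)=0$ for every $(s,t)$ and the stated joint moment is simply the covariance of $\mathscr{X}(s_1,t_1)$ and $\mathscr{X}(s_2,t_2)$. First I would record that each rectangular increment of $\mathscr{X}$ is centered with variance equal to the area of the rectangle. Differentiating the characteristic function in (\ref{chdx}) twice at $u=0$ gives $\mathbb{E}\,\Delta_{a_1,b_1}\mathscr{X}(a_2,b_2)=0$ and $\mathbb{E}(\Delta_{a_1,b_1}\mathscr{X}(a_2,b_2))^2=(a_2-a_1)(b_2-b_1)$, in agreement with the elementary identity $\operatorname{Var}(\sum_{j=1}^{M}Z_j)=\mathbb{E}M$ for $M\sim\mathrm{Poisson}((a_2-a_1)(b_2-b_1))$. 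Specializing the rectangle to start on the axes, where $\mathscr{X}$ vanishes by Proposition \ref{prop}, yields in particular $\mathbb{E}\mathscr{X}(s,t)=0$ and $\operatorname{Var}(\mathscr{X}(s,t))=st$.

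Next I would decompose the two generating rectangles $R_i=[0,s_i]\times[0,t_i]$, $i=1,2$, around their intersection, which is the corner rectangle $Q=R_1\cap R_2=[0,s_1\wedge s_2]\times[0,t_1\wedge t_2]$, so that $\mathscr{X}(Q)=\mathscr{X}(s_1\wedge s_2,t_1\wedge t_2)$. The three regions $Q$, $R_1\setminus Q$ and $R_2\setminus Q$ are pairwise disjoint. Writing $A:=\mathscr{X}(Q)$ and letting $B_1$, $B_2$ denote the sums of the marks over $R_1\setminus Q$ and $R_2\setminus Q$ respectively, the additivity of rectangular increments together with the independent-increments property of Proposition \ref{prop} and the independence of the $Z_j$'s makes $A,B_1,B_2$ mutually independent and each centered, with $\mathscr{X}(s_1,t_1)=A+B_1$ and $\mathscr{X}(s_2,t_2)=A+B_2$. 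Expanding the product then gives $\mathbb{E}\mathscr{X}(s_1,t_1)\mathscr{X}(s_2,t_2)=\mathbb{E}A^2+\mathbb{E}AB_2+\mathbb{E}B_1A+\mathbb{E}B_1B_2$, and every cross term vanishes because its factors are independent and centered. Hence the moment reduces to $\mathbb{E}A^2=\operatorname{Var}(\mathscr{X}(Q))=(s_1\wedge s_2)(t_1\wedge t_2)$ by the increment variance from the first step, which is the claim.

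The only genuinely delicate point is the independence assertion in the middle step, since $R_1\setminus Q$ and $R_2\setminus Q$ are L-shaped rather than single rectangles. I would make this rigorous by passing to the common refinement of $R_1\cup R_2$ into the grid cells cut out by the lines $x=s_1,s_2$ and $y=t_1,t_2$, expressing each field value as a sum of increments over those cells, and then invoking that increments over disjoint rectangles are independent and centered. With this refinement the covariance is literally the sum of the variances (areas) of the cells lying in both $R_1$ and $R_2$, that is, the cells tiling $Q$; summing these areas recovers $(s_1\wedge s_2)(t_1\wedge t_2)$ and at the same time handles transparently all the cases arising from the relative order of the $s_i$ and of the $t_i$.
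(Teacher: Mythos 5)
Your proof is correct, but it takes a genuinely different route from the paper's. The paper proves (\ref{jchdx}) by computing the joint characteristic function $\phi(u,v)=\mathbb{E}\exp(iu\mathscr{X}(s_1,t_1)+iv\mathscr{X}(s_2,t_2))$ explicitly: it splits into cases according to the ordering of $s_1,s_2$ and of $t_1,t_2$, decomposes the relevant region into disjoint rectangles (four rectangles in Case I, three in Case II), factorizes $\phi$ using the independent increments property and (\ref{chdx}), and then extracts the joint moment as $-\frac{\partial^2}{\partial u\partial v}\phi(u,v)\big|_{u=v=0}$. You instead work directly at the level of second moments: increments are centered with variance equal to the area of the rectangle (obtained from (\ref{chdx}) or from the conditional variance formula), the refinement by the grid lines $x=s_1,s_2$ and $y=t_1,t_2$ writes both field values as sums of increments over common cells, and bilinearity of covariance together with uncorrelatedness of increments over disjoint cells makes the covariance equal to the total area of the cells tiling $R_1\cap R_2=[0,s_1\wedge s_2]\times[0,t_1\wedge t_2]$. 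Your grid-refinement step is also the right way to dispose of the one delicate point you flag, namely that $R_1\setminus Q$ and $R_2\setminus Q$ are L-shaped rather than rectangular, so Proposition \ref{prop} does not apply to them directly. What your approach buys: it treats all four orderings uniformly (no case analysis), it avoids differentiating products of exponentials, and it needs less than full mutual independence, since pairwise uncorrelatedness of disjoint increments already suffices. What the paper's approach buys: the explicit joint characteristic function itself, which carries more distributional information than the second moment and is computed along the way. Both arguments ultimately rest on the same structural inputs from Proposition \ref{prop}: additivity of the field over disjoint rectangles and independence of the corresponding increments.
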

\begin{proof}
	Let us consider the joint characteristic function $\phi(u,v)=\mathbb{E}\exp(iu\mathscr{X}(s_1,t_1)+iv\mathscr{X}(s_2,t_2))$, $u\in\mathbb{R}$, $v\in\mathbb{R}$. Then, 
	\begin{equation*}
		\mathbb{E}\mathscr{X}(s_1,t_1)\mathscr{X}(s_2,t_2))=-\frac{\partial^2}{\partial u\partial v}\phi(u,v)\bigg|_{u=v=0}.
	\end{equation*}
	For $(s_1,t_1)$ and $(s_2,t_2)$ in $\mathbb{R}^2_+$, we have the following four possible ordering in the plane:\\
	$1.\ s_1<s_2,\ t_1<t_2$,\\
	$2.\ s_1<s_2,\ t_1>t_2$,\\
	$3.\ s_1>s_2,\ t_1<t_2$,\\
	$4.\  s_1>s_2,\ t_1>t_2$.
	
	Note that first two cases are sufficient to derive (\ref{jchdx}). 
	\paragraph{\textbf{Case I}} If $s_1<s_2$ and $t_1<t_2$ then there are four disjoint rectangles as shown in Figure \ref{fig1}, and $\mathscr{X}(s_2,t_2)$ can be written in terms of the increments over these rectangles as follows: $\mathscr{X}(s_2,t_2)=\Delta_{s_1,t_1}\mathscr{X}(s_2,t_2)+\Delta_{0,t_1}\mathscr{X}(s_1,t_2)+\Delta_{s_1,0}\mathscr{X}(s_2,t_1)+\mathscr{X}(s_1,t_1)$. Therefore,
	
	\begin{figure}[ht!]
		\includegraphics[width=5cm]{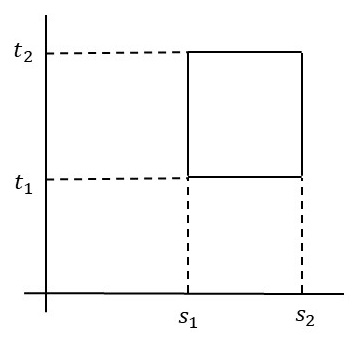}
		\caption{First type of ordering in plane}\label{fig1}
	\end{figure}
	
	\begin{align}
		\phi(u,v)
		&=\mathbb{E}(i(u+v)\mathscr{X}(s_1,t_1)+iv\Delta_{s_1,t_1}\mathscr{X}(s_2,t_2)+iv\Delta_{0,t_1}\mathscr{X}(s_1,t_2)+iv\Delta_{s_1,0}\mathscr{X}(s_2,t_1))\nonumber\\
		&=\mathbb{E}(i(u+v)\mathscr{X}(s_1,t_1))\mathbb{E}\exp(iv\Delta_{s_1,t_1}\mathscr{X}(s_2,t_2))\mathbb{E}(iv\Delta_{0,t_1}\mathscr{X}(s_1,t_2))\mathbb{E}(iv\Delta_{s_1,0}\mathscr{X}(s_2,t_1))\nonumber\\
		&=\exp\big(s_1t_1(e^{-(u+v)^2/2}-1)\big)\exp\big((s_2-s_1)(t_2-t_1)(e^{-v^2/2}-1)\big)\nonumber\\
		&\hspace{3cm} \cdot\exp\big(s_1(t_2-t_1)(e^{-v^2/2}-1)\big)\exp\big((s_2-s_1)t_1(e^{-v^2/2}-1)\big)\nonumber\\
		&=\exp\big(s_1t_1(e^{-(u+v)^2/2}-1)+(s_2t_2-s_1t_1)(e^{-v^2/2}-1)\big),\label{jmpf1}
	\end{align}
	where the second equality follows from the independent increments property of CPRF and to get the penultimate step, we have used (\ref{chdx}). From (\ref{jmpf1}), we have
	\begin{align*}
		\frac{\partial^2}{\partial u\partial v}\phi(u,v)&=\phi(u,v)(-s_1t_1(u+v)e^{-(u+v)^2/2}-(s_2t_2-s_1t_1)ve^{-v^2/2})(-s_1t_1(u+v)e^{-(u+v)^2/2})\\
		&\ \ +\phi(u,v)(-s_1t_1e^{-(u+v)^2/2}+s_1t_1(u+v)^2e^{-(u+v)^2/2}).
	\end{align*}
	Thus, $\mathbb{E}\mathscr{X}(s_1,t_1)\mathscr{X}(s_2,t_2))=s_1t_1$ whenever $s_1<s_2$ and $t_1<t_2$.
	\paragraph{\textbf{Case II}} If $s_1<s_2$ and $t_1>t_2$ then we have three disjoint rectangles as shows in Figure \ref{fig2}, and $\mathscr{X}(s_1,t_1)=\Delta_{0,t_2}\mathscr{X}(s_1,t_1)+\mathscr{X}(s_1,t_2)$ and $\mathscr{X}(s_2,t_2)=\Delta_{s_1,0}\mathscr{X}(s_2,t_2)+\mathscr{X}(s_1,t_2)$. So, by using the independent increments property of CPRF, we get
	
	\begin{figure}[ht!]
		\includegraphics[width=5cm]{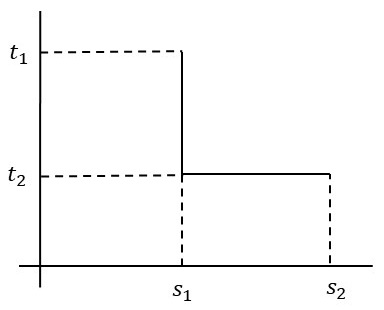}
		\caption{Second type of ordering in plane}\label{fig2}
	\end{figure}
	\begin{align*}
		\phi(u,v)
		&=\mathbb{E}\exp(iu\Delta_{0,t_2}\mathscr{X}(s_1,t_1)+i(u+v)\mathscr{X}(s_1,t_2)+iv\Delta_{s_1,0}\mathscr{X}(s_2,t_2))\\
		&=\exp(s_1(t_1-t_2)(e^{-u^2/2}-1))\exp(s_1t_2(e^{-(u+v)^2/2}-1))\exp((s_2-s_1)t_2(e^{-v^2/2}-1)).
	\end{align*}
	So,
	\begin{align*}
		\frac{\partial^2}{\partial u\partial v}\phi(u,v)&=\phi(u,v)(-s_1t_2(u+v)e^{-(u+v)^2/2}-(s_2-s_1)t_2ve^{-v^2/2}) (-s_1(t_1-t_2)ue^{-u^2/2}\\
		&\ \ -s_1t_2(u+v)e^{-(u+v)^2/2}) +\phi(u,v)(-s_1(t_1-t_2)ue^{-u^2/2}-s_1t_2e^{-(u+v)^2/2}).
	\end{align*}
	Hence,
	$	\mathbb{E}\mathscr{X}(s_1,t_1)\mathscr{X}(s_2,t_2))=s_1t_2$ whenever $s_1<s_2$ and $t_1>t_2$.
	
	Thus, for general case, we have
	\begin{equation*}
		\mathbb{E}\mathscr{X}(s_1,t_1)\mathscr{X}(s_2,t_2))=\begin{cases}
			s_1t_1,\ s_1<s_2,\ t_1<t_2,\\
			s_1t_2,\ s_1<s_2,\ t_1>t_2,\\
			s_2t_1,\ s_1>s_2,\ t_1<t_2,\\
			s_2t_2,\ s_1>s_2,\ t_1>t_2.
		\end{cases}
	\end{equation*}
	This completes the proof.
\end{proof}
	
	\subsubsection{Limiting result} Let $\{Z_j\}_{j\ge1}$ be a sequence of iid standard normal random variables and $\{N(t),\ t\ge0\}$ be the Poisson process with unit transition rate such that it is independent of $\{Z_j\}_{j\ge1}$. Then, the following scaled continuous time random walk:
	\begin{equation}\label{cpp}
		\bigg\{r^{1/2}\sum_{j=1}^{N(t/r)}Z_j,\ t\ge0\bigg\},
	\end{equation}
	 converges to the one parameter Brownian motion $\{B(t),\ t\ge0\}$ in an appropriate Skorokhod topology as $r\to0$.
	 
	 For the following definition of two parameter Brownian sheet, we refer the reader to \cite{Bardina2000}.
	\begin{definition}\label{BS}
		A two parameter random process $\{W(t_1,t_2),\ (t_1,t_2)\in\mathbb{R}^2_+\}$ is called Brownian sheet if $W(0,t_2)=W(t_1,0)=0$ almost surely, and it has independent and stationary rectangular increments. Also, for $(s_1,t_1)\prec(s_2,t_2)$, the increment $\Delta_{s_1,t_1}W(s_2,t_2)$ has normal distribution with zero mean and variance $(s_2-s_1)(t_2-t_1)$.
	\end{definition}
	Let us define a scaled normal CPRF as follows:
	\begin{equation}\label{scl}
		\mathscr{X}_n(t_1,t_2)=n^{-1}\sum_{j=1}^{N(nt_1,nt_2)}Z_j.
	\end{equation}
	As $n\to\infty$, its characteristic function, that is, $\mathbb{E}e^{iu\mathscr{X}_n(t_1,t_2)}=\exp\big(n^{2}t_1t_2(e^{-u^2n^{-2}/2}-1)\big)$  converges to $\exp(-u^2t_1t_2/2)$, the characteristic function of a Gaussian random variable with mean zero and variance $t_1t_2$. Thus, for fix $(t_1,t_2)\in\mathbb{R}^2_+$, the sequence $\{\mathscr{X}_n(t_1,t_2)\}_{n\ge1}$ converges in distribution to $W(t_1,t_2)$.
	
	For any set of constants $c_1,c_2,\dots,c_m$ and $(s_1,t_1),(s_2,t_2),\dots$, $(s_m,t_m)\in\mathbb{R}^2_+$, let us consider $U_n=\sum_{j=1}^{m}c_j\mathscr{X}_n(s_j,t_j)$.
	Then, $\mathbb{E}U_n=0$ and
	\begin{align*}
		\mathbb{E}U_n^2=\mathbb{E}\bigg(\sum_{j=1}^{m}c_j\mathscr{X}_n(s_j,t_j)\bigg)^2&=\sum_{j=1}^{m}\sum_{l=1}^{m}c_jc_l\mathbb{E}\mathscr{X}_n(s_j,t_j)\mathscr{X}_n(s_l,t_l)\\
		&=n^{-2}\sum_{j=1}^{m}\sum_{l=1}^{m}c_jc_ln^2(s_j\wedge t_j)(s_{j+1}\wedge t_{j+1})\\
		&=\mathbb{E}\bigg(\sum_{j=1}^{m}c_jW(s_j,t_j)\bigg)^2,
	\end{align*} 
	where we have used Proposition \ref{prop1} to get the penultimate step. 
	\begin{remark}		
		The above limiting result for scaled continuous time random walk (\ref{cpp}) is expected for the scaled normal CPRF (\ref{scl}). However, due to limitation of technique for the functional convergence of two parameter random processes, we could show the convergence of scaled CPRF in distribution only for a fixed $(t_1,t_2)\in\mathbb{R}^2_+$.
	\end{remark} 

\section{Fractional variants of CPRF}\label{sec4}
In this section, we introduce and study some fractional variants of CPRF. 
 
For $\alpha_i\in(0,1]$, $i=1,2$, let $\{N_{\alpha_1,\alpha_2}(t_1,t_2)\in\mathbb{R}^2_+\}$ be the fractional Poisson random field (FPRF) on plane with parameter $\lambda>0$ as defined in \cite{Kataria2024}. Its distribution is given by
\begin{equation}\label{fprfdist}
	p_{\alpha_1,\alpha_2}(n,t_1,t_2)=\mathrm{Pr}\{N_{\alpha_1,\alpha_2}(t_1,t_2)=n\}=\sum_{k=n}^{\infty}\frac{(-1)^{k-n}(k)_{k-n}(k)_n(\lambda t_1^{\alpha_1}t_2^{\alpha_2})^k}{\Gamma(k\alpha_1+1)\Gamma(k\alpha_2+1)},\ n\ge0,
\end{equation} 
where $(k)_n=k(k-1)\dots(k-n+1)$ with $(k)_0=1$. Also, its Laplace transform is 
\begin{equation}\label{fprflap}
	\mathbb{E}\exp(-u{N_{\alpha_1,\alpha_2}(t_1,t_2)})=\sum_{k=0}^{\infty}\frac{k!(\lambda t_1^{\alpha_1}t_2^{\alpha_2}(e^{-u}-1))^k}{\Gamma(k\alpha_1+1)\Gamma(k\alpha_2+1)},\ u>0.
\end{equation}

Let us consider a time fractional CPRF defined as follows (see \cite{Kataria2024}):
\begin{equation}\label{fcprf}
	\mathscr{Y}_{\alpha_1,\alpha_2}(t_1,t_2)=\sum_{j=1}^{N_{\alpha_1,\alpha_2}(t_1,t_2)}Y_j,\ (t_1,t_2)\in\mathbb{R}^2_+,
\end{equation}
where $Y_j$'s are iid random variables that are independent of $\{N_{\alpha_1,\alpha_2}(t_1,t_2)\in\mathbb{R}^2_+\}$. For $\alpha_1=\alpha_2=1$, the time fractional CPRF reduces to the CPRF defined in (\ref{cprf}). The cumulative distribution function of (\ref{fcprf}) is given by
 \begin{equation*}
 	\mathrm{Pr}\{\mathscr{Y}_{\alpha_1,\alpha_2}(t_1,t_2)\leq y\}=\sum_{n=0}^{\infty}p_{\alpha_1,\alpha_2}(n,t_1,t_2)F_Y^{*n}(y),
 \end{equation*}
 where $F_Y^{*n}(y)$ is the $n$-fold convolution of the distribution function of $Y_1$  with $F_Y^{*0}(y)$ being the Heaviside function. Thus, its density function is
 \begin{equation}\label{fcprfden}
 	f_{\mathscr{Y}_{\alpha_1,\alpha_2}}(y,t_1,t_2)=\sum_{n=0}^{\infty}p_{\alpha_1,\alpha_2}(n,t_1,t_2)f_Y^{*n}(y),
 \end{equation}
 where $f_Y^{*n}(y)$ is the density corresponding to $F_Y^{*n}(y)$.
 
\subsection{Time fractional CPRF with normal compounding} Let us consider the following time fractional CPRF:
\begin{equation}\label{case1def}
	\mathscr{X}_{\alpha_1,\alpha_2}(t_1,t_2)\coloneqq\sum_{j=1}^{N_{\alpha_1,\alpha_2}^{(1)}(t_1,t_2)}Z_j,\ (t_1,t_2)\in\mathbb{R}^2_+,\ 0<\alpha_i\leq1,\ i=1,2,
\end{equation}
where $N_{\alpha_1,\alpha_2}^{(1)}(t_1,t_2)$ is FPRF with parameter $1$ that is independent of the iid standard normal random variables $Z_j$'s. In this case, the $n$-fold convolution of distribution function of $Z_j$ is given by
$
	F_Z^{*n}=(1+\text{erf}({y}/{\sqrt{2n}}))/2$,
where 
$\text{erf}(y)=(\sqrt{\pi})^{-1}\int_{0}^{y}e^{-x^2}\,\mathrm{d}x$.

\subsubsection{Limiting results} 
\paragraph{\textbf{\textit{Case I}}} For $\alpha_2=1$, by using (\ref{fprflap}), the characteristic function of the scaled time fractional CPRF
\begin{equation*}
	\mathscr{X}^{(n)}_{\alpha,1}(t_1,t_2)\coloneqq n^{-1/2}\sum_{j=1}^{N_{\alpha,1}^{(1)}(t_1,nt_2)}Z_j
\end{equation*}
 is given by
 \begin{equation*}
 	\mathbb{E}\exp(iu\mathscr{X}^{(n)}_{\alpha,1}(t_1,t_2))=\sum_{k=0}^{\infty}\frac{(n t_1^{\alpha_1}t_2(e^{-u^2n^{-1}/2}-1))^k}{\Gamma(k\alpha+1)}=E_{\alpha,1}(n t_1^{\alpha}t_2(e^{-u^2n^{-1}/2}-1)).
 \end{equation*}
So,
 \begin{equation}\label{tcbslap}
 	\lim_{n\to\infty}E_{\alpha,1}(n t_1^{\alpha}t_2(e^{-u^2n^{-1}/2}-1))=E_{\alpha,1}(-t_1^{\alpha}t_2u^2/2),\ u\in\mathbb{R},
 \end{equation}
which is a well defined characteristic function.
 
 Let $\{W(t_1,t_2),\ (t_1,t_2)\in\mathbb{R}^2_+\}$ be the two parameter Brownian sheet (see Definition \ref{BS}), and let $\{L_\alpha(t),\ t\ge0\}$, $0<\alpha<1$ be an inverse stable subordinator as defined in (\ref{insub})  such that it is independent of $\{W(t_1,t_2),\ (t_1,t_2)\in\mathbb{R}^2_+\}$. Then, the Fourier transform of the time-changed two parameter process
 \begin{equation}\label{tcbs}
 	W(L_\alpha(t_1),t_2),\ (t_1,t_2)\in\mathbb{R}^2_+,
 \end{equation} 
 is given by
 \begin{align*}
 	\mathbb{E}\exp(iu	W(L_\alpha(t_1),t_2))&=\int_{0}^{\infty}\mathbb{E}e^{iuW(x,t_2)}\mathrm{Pr}\{L_\alpha(t_1)\in\mathrm{d}x\},\ u>0,\\
 	&=\int_{0}^{\infty}e^{-u^2xt_2/2}\mathrm{Pr}\{L_\alpha(t_1)\in\mathrm{d}x\}.
 \end{align*}
 Its Laplace transform is given by
 \begin{align}
 	\int_{0}^{\infty}e^{-zt_1}\mathbb{E}\exp(-u	W(L_\alpha(t_1),t_2))\,\mathrm{d}t_1&=z^{\alpha-1}\int_{0}^{\infty}e^{-(u^2t_2/2+z^\alpha)x}\,\mathrm{d}x\nonumber\\
 	&=\frac{z^{\alpha-1}}{(u^2t_2/2)+z^\alpha},\ z>0,\label{lapbs}
 \end{align}
 where we have used (\ref{insublap}). In view of (\ref{opmllap}), the inverse Laplace transform of (\ref{lapbs}) coincides with (\ref{tcbslap}).  
 Thus, for each $(t_1,t_2)\in\mathbb{R}^2_+$,  $\mathscr{X}_{\alpha,1}^{(n)}(t_1,t_2)$ converges in distribution to (\ref{tcbs}) as $n\to\infty$.
Similarly, 
 	$
 			\mathscr{X}^{(n)}_{1,\alpha}(t_1,t_2)\coloneqq n^{-1/2}\sum_{j=1}^{N_{1,\alpha}^{(1)}(nt_1,t_2)}Z_j
 	$
 	converges in distribution to $W(t_1,L_{\alpha}(t_2))$ as $n\to\infty$.

 \paragraph{\textbf{\textit{Case II}}} Let $\{N_{\alpha_1,\alpha_2}^{(n^2)}(t_1,t_2),\ (t_1,t_2)\in\mathbb{R}^2_+\}$ be the FPRF with parameter $n^2>0$ that is independent of standard normal random variables $Z_1,Z_2,\dots$. Let us consider the following scaled time fractional CPRF:
 \begin{equation}\label{bstc2}
 	\tilde{\mathscr{X}}_{\alpha_1,\alpha_2}^{(n)}(t_1,t_2)\coloneqq n^{-1}\sum_{j=1}^{N_{\alpha_1,\alpha_2}^{(n^2)}(t_1,t_2)}Z_j,\ (t_1,t_2)\in\mathbb{R}^2_+.
 \end{equation}
 Its Fourier transform is given by
 \begin{equation*}
 	\mathbb{E}\exp(iu\tilde{\mathscr{X}}_{\alpha_1,\alpha_2}^{(n)}(t_1,t_2))=\sum_{k=0}^{\infty}\frac{k!(n^2 t_1^{\alpha_1}t_2^{\alpha_2}(e^{-u^2n^{-2}/2}-1))^k}{\Gamma(k\alpha_1+1)\Gamma(k\alpha_2+1)}.
 \end{equation*}
 So,
 \begin{align}\label{bstclap2}
 	\lim_{n\to\infty}\mathbb{E}\exp(iu\tilde{\mathscr{X}}_{\alpha_1,\alpha_2}^{(n)}(t_1,t_2))&=\sum_{k=0}^{\infty}\frac{k!(- t_1^{\alpha_1}t_2^{\alpha_2}u^2/2)^k}{\Gamma(k\alpha_1+1)\Gamma(k\alpha_2+1)}\nonumber\\
 	&={}_2\Psi_2\Bigg[\begin{matrix}
 		(1,1),&(1,1)\\\\
 		(1,\alpha_1),&(1,\alpha_2)
 	\end{matrix}\bigg|-t_1^{\alpha_1}t_2^{\alpha_2}u^2/2\Bigg],\ u\in\mathbb{R},
 \end{align}
 where ${}_2\Psi_2$ is the generalized Wright function defined in (\ref{genwrit}).
 
 For  $i=1,2$, let $\{L_{\alpha_i}(t),\ t\ge0\}$, $0<\alpha_i<1$ be two independent inverse stable subordinator that are independent of the Brownian sheet $\{W(t_1,t_2),\ (t_1,t_2)\in\mathbb{R}^2_+\}$. Let us consider the following time changed Brownian sheet:
 \begin{equation}\label{bstc1}
 	W(L_{\alpha_1}(t_1),L_{\alpha_2}(t_2)),\ (t_1,t_2)\in\mathbb{R}^2_+,
 \end{equation}
 whose Fourier transform is given by
 \begin{align*}
 	\mathbb{E}(iuW(L_{\alpha_1}(t_1),L_{\alpha_2}(t_2)))&=\iint_{\mathbb{R}_+\times\mathbb{R}_+}e^{iuW(x_1,x_2)}\mathrm{Pr}\{L_{\alpha_1}(t_1)\in\mathrm{d}x_1\}\mathrm{Pr}\{L_{\alpha_2}(t_2)\in\mathrm{d}x_2\}\\
 	&=\iint_{\mathbb{R}_+\times\mathbb{R}_+}e^{-u^2x_1x_2/2}\mathrm{Pr}\{L_{\alpha_1}(t_1)\in\mathrm{d}x_1\}\mathrm{Pr}\{L_{\alpha_2}(t_2)\in\mathrm{d}x_2\},\ u\in\mathbb{R}.
 \end{align*}
 Its double Laplace transform is 
 \begin{align*}
 	\iint_{\mathbb{R}_+\times\mathbb{R}_+}e^{-z_1t_1-z_2t_2}\mathbb{E}(iuW(L_{\alpha_1}(t_1)&,L_{\alpha_2}(t_2)))\,\mathrm{d}t_1\,\mathrm{d}t_2\\
 	&=z_1^{\alpha_1-1}z_2^{\alpha_2-1}\iint_{\mathbb{R}_+\times\mathbb{R}_+}e^{-u^2x_1x_2/2}e^{-z_1^{\alpha_1}x_1}e^{-z_2^{\alpha_2}x_2}\,\mathrm{d}x_1\,\mathrm{d}x_2\\
 	&=z_2^{\alpha_2-1}\int_{0}^{\infty}\frac{z_1^{\alpha_1-1}}{z_1^{\alpha_1}+(u^2x_2/2)}e^{-z_2^{\alpha_2}x_2}\,\mathrm{d}x_2,
 \end{align*}
 whose inversion with respect to $z_1$ yields
 \begin{align*}
 	\iint_{\mathbb{R}_+\times\mathbb{R}_+}e^{-z_2t_2}\mathbb{E}(iuW(L_{\alpha_1}(t_1),L_{\alpha_2}(t_2)))\,\mathrm{d}t_2&=z_2^{\alpha_2-1}\int_{0}^{\infty}E_{\alpha_1,1}(-u^2x_2t_1^{\alpha_1}/2)e^{-z_2^{\alpha_2}x_2}\,\mathrm{d}x_2\\
 	&=\sum_{k=0}^{\infty}\frac{(-u^2t_1^{\alpha_1}/2)^k}{\Gamma(\alpha_1 k+1)}z_2^{\alpha_2-1}\int_{0}^{\infty}x_2^ke^{-z_2^{\alpha_2}x_2}\,\mathrm{d}x_2\\
 	&=\sum_{k=0}^{\infty}\frac{(-u^2t_1^{\alpha_1}/2)^kk!}{\Gamma(\alpha_1 k+1)z_2^{\alpha_2 k+1}}.
 \end{align*}
 Further, on taking inverse Laplace transform with respect to $z_2$ gives (\ref{bstclap2}). Thus, for each $(t_1,t_2)\in\mathbb{R}^2_+$, $\tilde{\mathscr{X}}_{\alpha_1,\alpha_2}^{(n)}(t_1,t_2)$ converges in distribution to (\ref{bstc1}) as $n\to\infty$.
 \begin{remark}
 	For $\alpha_2=1$, (\ref{bstclap2}) reduces to (\ref{tcbslap}). Hence,
 	\begin{equation*}
 		\lim_{n\to\infty}\mathscr{X}^{(n)}_{\alpha,1}(t_1,t_2)\overset{d}{=}\lim_{n\to\infty}\tilde{\mathscr{X}}_{\alpha_1,\alpha_2}^{(n)}(t_1,t_2)\Big|_{\alpha_1=\alpha,\alpha_2=1},\ (t_1,t_2)\in\mathbb{R}^2_+,
 	\end{equation*}
 	where $\overset{d}{=}$ denotes the equality in distribution.
 \end{remark}

\subsection{Time fractional CPRF with exponential compounding} If the random variables $Y_1$, $Y_2$, $\dots$ in (\ref{fcprf}) are iid exponential random variables with parameter $\sigma>0$ then in (\ref{fcprfden}), we have $f_Y^{*n}=e^{-\sigma y}\sigma^ny^{n-1}/(n-1)!$, $n\ge1$. In (\ref{fcprfden}), by writing the distribution of FPRF in terms of the generalized wright function, the distribution function of time fractional CPRF is given by
		\begin{align}
			\mathrm{Pr}&\{\mathscr{Y}_{\alpha_1,\alpha_2}(t_1,t_2)\leq y\}\nonumber\\
			&={}_2\Psi_2\Bigg[\begin{matrix}
				(1,1),&(1,1)\\\\
				(1,\alpha_1),&(1,\alpha_2)
			\end{matrix}\bigg|-t_1^{\alpha_1}t_2^{\alpha_2}\lambda\Bigg]\textbf{1}_{[0,\infty)}(y)+\int_{-\infty}^{y}f_{\mathscr{Y}_{\alpha_1,\alpha_2}}(x,t_1,t_2)\,\mathrm{d}x,\ y\in\mathbb{R},\ (t_1,t_2)\in\mathbb{R}^2_+,\label{fcprfdf}
		\end{align} 
		where
		\begin{equation}\label{fcprfeden}
		f_{\mathscr{Y}_{\alpha_1,\alpha_2}}(x,t_1,t_2)=\frac{e^{-\sigma y}}{y}\sum_{n=1}^{\infty}\frac{(\lambda\sigma t_1^{\alpha_1}t_2^{\alpha_2}y)^n}{(n-1)!}{}_2\Psi_2\Bigg[\begin{matrix}
			(n+1,1),&(n+1,1)\\\\
			(n\alpha_1+1,\alpha_1),&(n\alpha_2+1,\alpha_2)
		\end{matrix}\bigg|-\lambda t_1^{\alpha_1}t_2^{\alpha_2}\Bigg]\textbf{1}_{[0,\infty)}(y).
		\end{equation}
	Here, ${}_2\Psi_2$ is the generalized Wright function defined in (\ref{genwrit}).
	\begin{theorem}
		For $\alpha_1=1$, the distribution function (\ref{fcprfdf}) reduces to
		\begin{equation*}
			\mathrm{Pr}\{\mathscr{Y}_{1,\alpha_2}(t_1,t_2)\leq y\}=E_{\alpha_2,1}(-\lambda t_1t_2^{\alpha_2})\textbf{1}_{[0,\infty)}(y)+\int_{-\infty}^{y}f_{\mathscr{Y}_{1,\alpha_2}}(x,t_1,t_2)\,\mathrm{d}x,\ y\in\mathbb{R},\ (t_1,t_2)\in\mathbb{R}^2_+.
		\end{equation*}
		Here,
		\begin{equation}\label{f}
			f_{\mathscr{Y}_{1,\alpha_2}}(x,t_1,t_2)=\frac{e^{-\sigma y}}{y}\sum_{n=1}^{\infty}\frac{(\lambda\sigma t_1t_2^{\alpha_2}y)^n}{(n-1)!}E_{\alpha_2,n\alpha_2+1}^{n+1}(-\lambda t_1t_2^{\alpha_2})\textbf{1}_{[0,\infty)}(y),
		\end{equation}
		where $E_{\alpha,\beta}^\gamma(\cdot)$, $\alpha>0$, $\beta\in\mathbb{R}$, $\gamma\in\mathbb{R}$ is the three parameter Mittag-Leffler function defined in (\ref{tpml}).
		
		The function $f_{\mathscr{Y}_{1,\alpha_2}}(x,t_1,t_2)$ solves the following partial differential equation:
		\begin{equation}\label{fdeq}
			\sigma\frac{\partial^{\alpha_2}}{\partial t_2^{\alpha_2}}f_{\mathscr{Y}_{1,\alpha_2}}(x,t_1,t_2)=-\bigg(\lambda t_1+\frac{\partial^{\alpha_2}}{\partial t_2^{\alpha_2}}\bigg)\frac{\partial}{\partial y}f_{\mathscr{Y}_{1,\alpha_2}}(x,t_1,t_2),\ t_1\ge0
		\end{equation}
		with $f_{\mathscr{Y}_{1,\alpha_2}}(x,t_1,0)=0$, where $\frac{\partial^{\alpha}}{\partial t_2^{\alpha}}$ is the Caputo fractional derivative defined in (\ref{caputoder}).
		 Also,
		\begin{equation*}
			\int_{0}^{\infty}f_{\mathscr{Y}_{1,\alpha_2}}(x,t_1,t_2)\,\mathrm{d}y=1-E_{\alpha_2,1}(-\lambda t_1t_2^{\alpha_2}).
		\end{equation*}
	\end{theorem}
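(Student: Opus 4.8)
The plan is to treat the three claims---the closed forms for the distribution function and the density (\ref{f}), the governing equation (\ref{fdeq}), and the total mass of the absolutely continuous part---separately, in that order.

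First, for the reduction of (\ref{fcprfdf}) and (\ref{fcprfeden}) at $\alpha_1=1$, I would specialise the generalised Wright functions (\ref{genwrit}). In the atom term the relevant function is ${}_2\Psi_2[(1,1),(1,1);(1,1),(1,\alpha_2)\,|\,x]$; its $N$th coefficient is $\Gamma(N+1)\Gamma(N+1)/[\Gamma(N+1)\Gamma(\alpha_2 N+1)N!]$, so one $\Gamma(N+1)$ cancels the lower entry $(1,1)$ and the other cancels $N!$, leaving $\sum_N x^N/\Gamma(\alpha_2 N+1)=E_{\alpha_2,1}(x)$; evaluating at $x=-\lambda t_1t_2^{\alpha_2}$ gives the stated jump. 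In the summand the function ${}_2\Psi_2[(n+1,1),(n+1,1);(n+1,1),(n\alpha_2+1,\alpha_2)\,|\,x]$ simplifies the same way, and using $\Gamma(n+1+N)/\Gamma(n+1)=(n+1)^{(N)}$ its series becomes $n!\,E^{n+1}_{\alpha_2,n\alpha_2+1}(x)$ as in (\ref{tpml}). The cleanest route, which also fixes the bookkeeping of the factorials, is to start directly from (\ref{fcprfden}) with the Erlang convolution $f_Y^{*n}(y)=\sigma^n y^{n-1}e^{-\sigma y}/(n-1)!$ and the explicit law (\ref{fprfdist}) at $\alpha_1=1$; collecting powers of $\lambda t_1t_2^{\alpha_2}$ and recognising the inner sum as $E^{n+1}_{\alpha_2,n\alpha_2+1}(-\lambda t_1t_2^{\alpha_2})$ yields (\ref{f}) after the $n!$ from the Wright reduction cancels against the $1/n!$ carried by the increment law.

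Second, to establish (\ref{fdeq}) I would pass to the Laplace transform in $t_2$. Writing the $t_2$-dependence of the $n$th term of (\ref{f}) as $t_2^{n\alpha_2}E^{n+1}_{\alpha_2,n\alpha_2+1}(-\lambda t_1t_2^{\alpha_2})$ and applying (\ref{tpmllap}) with $\alpha=\alpha_2$, $\beta=n\alpha_2+1$, $\gamma=n+1$, $c=-\lambda t_1$, each term transforms to $z^{\alpha_2-1}/(z^{\alpha_2}+\lambda t_1)^{n+1}$; summing the resulting series (of the form $\sum_{n\ge1}w^n/(n-1)!=w e^{w}$ with $w=\lambda\sigma t_1y/(z^{\alpha_2}+\lambda t_1)$) gives the closed form
\begin{equation*}
\tilde f(y,t_1,z)=\frac{\lambda\sigma t_1 z^{\alpha_2-1}}{(z^{\alpha_2}+\lambda t_1)^{2}}\exp\!\Big(\!-\sigma y+\frac{\lambda\sigma t_1 y}{z^{\alpha_2}+\lambda t_1}\Big).
\end{equation*}
Since every term of (\ref{f}) carries $t_2^{n\alpha_2}$ with $n\ge1$, both $f_{\mathscr{Y}_{1,\alpha_2}}$ and $\partial_y f_{\mathscr{Y}_{1,\alpha_2}}$ vanish at $t_2=0$, so the Caputo transform (\ref{caputoderlap}) produces no boundary term and (\ref{fdeq}) becomes the first-order linear ODE $\sigma z^{\alpha_2}\tilde f=-(\lambda t_1+z^{\alpha_2})\partial_y\tilde f$. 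A one-line differentiation shows $\partial_y\tilde f=-\frac{\sigma z^{\alpha_2}}{z^{\alpha_2}+\lambda t_1}\tilde f$, which verifies the ODE identically in $z$; uniqueness of Laplace inversion then gives (\ref{fdeq}), and the initial condition $f_{\mathscr{Y}_{1,\alpha_2}}(x,t_1,0)=0$ is the vanishing just noted.

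Finally, for the total mass I would integrate (\ref{f}) over $y>0$ term by term, using $\int_0^\infty y^{n-1}e^{-\sigma y}\,\mathrm{d}y=(n-1)!/\sigma^{n}$ so that the $n$th term contributes $(\lambda t_1t_2^{\alpha_2})^{n}E^{n+1}_{\alpha_2,n\alpha_2+1}(-\lambda t_1t_2^{\alpha_2})=p_{1,\alpha_2}(n,t_1,t_2)$; summing over $n\ge1$ and using $\sum_{n\ge0}p_{1,\alpha_2}(n,t_1,t_2)=1$ together with $p_{1,\alpha_2}(0,t_1,t_2)=E_{\alpha_2,1}(-\lambda t_1t_2^{\alpha_2})$ gives $1-E_{\alpha_2,1}(-\lambda t_1t_2^{\alpha_2})$, which is exactly the complement of the atom at the origin recorded in the distribution function. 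I expect the governing equation to be the only genuinely delicate step: the work lies in summing the Laplace series to the exponential closed form and in justifying the term-by-term Laplace transform and $y$-differentiation---the three-parameter Mittag-Leffler series is entire in its argument, which secures these interchanges---as well as confirming that no Caputo boundary terms survive. The two remaining claims are routine series and integral manipulations.
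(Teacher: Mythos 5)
Your proposal is correct, and it is considerably more explicit than the paper's own proof, which consists of two bare assertions: that at $\alpha_1=1$ the Wright function in (\ref{fcprfeden}) reduces to $E^{n+1}_{\alpha_2,n\alpha_2+1}(-\lambda t_1t_2^{\alpha_2})$, and that (\ref{fdeq}) follows by ``taking the appropriate derivatives'' of (\ref{f}); the total-mass identity is not argued at all. Two points of comparison. First, your factorial bookkeeping is sharper than the paper's: as you note, the Wright function with upper entries $(n+1,1),(n+1,1)$ and lower entries $(n+1,1),(n\alpha_2+1,\alpha_2)$ equals $n!\,E^{n+1}_{\alpha_2,n\alpha_2+1}$, not $E^{n+1}_{\alpha_2,n\alpha_2+1}$; this extra $n!$ is exactly compensated because (\ref{fcprfeden}) as printed omits the factor $1/n!$ that the law (\ref{fprfdist}) produces (one has $p_{\alpha_1,\alpha_2}(n,t_1,t_2)=\frac{(\lambda t_1^{\alpha_1}t_2^{\alpha_2})^n}{n!}\,{}_2\Psi_2[\cdots]$, so $p_{1,\alpha_2}(n,t_1,t_2)=(\lambda t_1t_2^{\alpha_2})^nE^{n+1}_{\alpha_2,n\alpha_2+1}(-\lambda t_1t_2^{\alpha_2})$). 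Your decision to rederive (\ref{f}) directly from (\ref{fcprfden}) and (\ref{fprfdist}), so that the $n!$ cancellation is visible, both confirms that the theorem's formula (\ref{f}) is correct and quietly repairs this inconsistency, which the paper's two compensating slips leave hidden. Second, for the governing equation the paper gestures at direct term-by-term fractional differentiation in the time domain, whereas you verify (\ref{fdeq}) in the Laplace domain: each term transforms via (\ref{tpmllap}) to $z^{\alpha_2-1}(z^{\alpha_2}+\lambda t_1)^{-(n+1)}$, the series sums through $\sum_{n\ge1}w^n/(n-1)!=we^w$ to the closed form $\tilde f=\lambda\sigma t_1z^{\alpha_2-1}(z^{\alpha_2}+\lambda t_1)^{-2}\exp\bigl(-\sigma y+\lambda\sigma t_1y/(z^{\alpha_2}+\lambda t_1)\bigr)$, and the identity $\partial_y\tilde f=-\sigma z^{\alpha_2}(z^{\alpha_2}+\lambda t_1)^{-1}\tilde f$ makes $\sigma z^{\alpha_2}\tilde f=-(\lambda t_1+z^{\alpha_2})\partial_y\tilde f$ hold identically in $z$. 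This buys rigor cheaply: it avoids Caputo differentiation of three-parameter Mittag-Leffler series, and the absence of Caputo boundary terms is transparent since every term of (\ref{f}) carries $t_2^{n\alpha_2}$ with $n\ge1$. Your term-by-term integration for the mass identity, using $\int_0^\infty y^{n-1}e^{-\sigma y}\,\mathrm{d}y=(n-1)!/\sigma^n$ to recognize the $n$th term as $p_{1,\alpha_2}(n,t_1,t_2)$ and then $\sum_{n\ge0}p_{1,\alpha_2}(n,t_1,t_2)=1$, is the natural argument the paper omits.
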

	\begin{proof}
		For $\alpha_1=1$, the generalized Wright function in (\ref{fcprfeden}) reduces to $E_{\alpha_2,n\alpha_2+1}^{n+1}(-\lambda t_1t_2^{\alpha_2})$. Also, the differential equation (\ref{fdeq}) can be obtain by taking the appropriate derivatives of (\ref{f}). This completes the proof.
	\end{proof}
\begin{remark}
	For $\alpha_1=\alpha_2=1$, the time fractional CPRF reduces to the CPRF whose distribution function is given by
	\begin{equation*}
		\mathrm{Pr}\{\mathscr{Y}(t_1,t_2)\leq y\}=e^{-\lambda t_1t_2}\textbf{1}_{[0,\infty)}(y)+\int_{-\infty}^{y}f_{\mathscr{Y}}(x,t_1,t_2)\,\mathrm{d}x,
	\end{equation*}
	where 
	\begin{equation*}
		f_{\mathscr{Y}}(y,t_1,t_2)=\frac{e^{-\sigma y-\lambda t_1t_2}}{y}\sum_{n=1}^{\infty}\frac{(\lambda\sigma t_1t_2y)^n}{(n-1)!n!}\textbf{1}_{[0,\infty)}(y).
	\end{equation*}
	It solves
	\begin{equation*}
		\sigma\frac{\partial}{\partial t_1}f_{\mathscr{Y}}(y,t_1,t_2)=-\bigg(\lambda t_2+\frac{\partial}{\partial t_1}\bigg)\frac{\partial}{\partial y}f_{\mathscr{Y}}(y,t_1,t_2)
	\end{equation*}
	and
	\begin{equation*}
	\sigma\frac{\partial}{\partial t_2}f_{\mathscr{Y}}(y,t_1,t_2)=-\bigg(\lambda t_1+\frac{\partial}{\partial t_2}\bigg)\frac{\partial}{\partial y}f_{\mathscr{Y}}(y,t_1,t_2)
	\end{equation*}
	with initial conditions $f_{\mathscr{Y}}(y,0,t_2)=0$ and $f_{\mathscr{Y}}(y,t_1,0)=0$, respectively. Also, we have
	\begin{equation*}
		\int_{0}^{\infty}f_{\mathscr{Y}}(y,t_1,t_2)\,\mathrm{d}y=1-e^{-\lambda t_1t_2},\ (t_1,t_2)\in\mathbb{R}^2_+.
	\end{equation*}
\end{remark}

\subsection{Time fractional CPRF with Mittag-Leffler compounding} For $0<\beta\leq1$, let $Y_1^\beta$, $ Y_2^\beta$, $\dots$ be iid random variable with common density function
\begin{equation*}
f_{Y^\beta}(y)=\sigma y^{\beta-1}E_{\beta,\beta}(-\sigma y^\beta)\textbf{1}_{[0,\infty)}(y),\ \sigma>0,
\end{equation*} 
where $E_{\alpha,\beta}(\cdot)$, $\alpha>0$, $\beta\in\mathbb{R}$ is  the two parameter Mittag-Leffler function defined in (\ref{tpml}). So, the density of $\sum_{j=1}^{n}Y_j^\beta$ is given by
\begin{equation}\label{sf1fold}
	f^{*n}_{Y^\beta}(y)=\sigma^ny^{\beta n-1}E_{\beta,\beta n}^n(-\sigma y^\alpha)\textbf{1}_{[0,\infty)}(y),
\end{equation}
whose Laplace transform is
\begin{equation}\label{mldistflap}
	\int_{0}^{\infty}e^{-uy}f^{*n}_{Y^\beta}(y)\,\mathrm{d}y=\frac{\sigma^n}{(u^\beta+\sigma)^n},\ u>0.
\end{equation}

Let us consider a time fractional CPRF defined as follows:
\begin{equation}\label{tfcprfml}
	\mathscr{Y}_{\alpha_1,\alpha_2}^\beta(t_1,t_2)\coloneqq\sum_{j=1}^{N_{\alpha_1,\alpha_2}(t_1,t_2)}Y_j^\beta,\ (t_1,t_2)\in\mathbb{R}^2_+,
\end{equation}
where the FPRF $\{N_{\alpha_1,\alpha_2}(t_1,t_2),\ (t_1,t_2)\in\mathbb{R}^2_+\}$ is independent of $\{Y_j^\beta\}_{j\ge1}$.
\begin{proposition}
	Let $\{H_\beta(t),\ t\ge0\}$ be one parameter stable subordinator as defined in Section \ref{sub} which is independent of the time fractional CPRF $\{\mathscr{Y}_{\alpha_1,\alpha_2}(t_1,t_2),\ (t_1,t_2)\in\mathbb{R}^2_+\}$ with exponential compounding. Then, the following time-changed relationship holds:
	\begin{equation*}
		\mathscr{Y}_{\alpha_1,\alpha_2}^\beta(t_1,t_2)\overset{d}{=}H_\beta(\mathscr{Y}_{\alpha_1,\alpha_2}(t_1,t_2)).
	\end{equation*}
\end{proposition}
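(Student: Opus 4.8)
The plan is to prove the distributional identity by showing that both sides share the same Laplace transform and then invoking uniqueness for nonnegative random variables. The computation rests on the single-variable fact that a Mittag-Leffler summand equals in distribution a stable subordinator evaluated at an independent exponential time: from (\ref{mldistflap}) with $n=1$, the Laplace transform of $Y_1^\beta$ is $\sigma/(u^\beta+\sigma)$, which is exactly $\mathbb{E}e^{-uH_\beta(E)}$ for $E$ an exponential random variable with parameter $\sigma$, since conditioning on $E$ and using (\ref{sublap}) gives $\mathbb{E}e^{-uH_\beta(E)}=\mathbb{E}e^{-Eu^\beta}=\sigma/(\sigma+u^\beta)$.

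First I would compute the Laplace transform of the right-hand side $H_\beta(\mathscr{Y}_{\alpha_1,\alpha_2}(t_1,t_2))$. Conditioning on the inner field and using the subordinator Laplace transform (\ref{sublap}),
\[
\mathbb{E}\exp\big(-uH_\beta(\mathscr{Y}_{\alpha_1,\alpha_2}(t_1,t_2))\big)=\mathbb{E}\exp\big(-u^\beta\mathscr{Y}_{\alpha_1,\alpha_2}(t_1,t_2)\big),
\]
so the right-hand side is merely the Laplace transform of the exponentially compounded time fractional CPRF evaluated at $u^\beta$. Conditioning the latter on $N_{\alpha_1,\alpha_2}(t_1,t_2)$ and using that a sum of $n$ iid exponential($\sigma$) variables has Laplace transform $(\sigma/(\sigma+u))^n$, I obtain
\[
\mathbb{E}\exp\big(-uH_\beta(\mathscr{Y}_{\alpha_1,\alpha_2}(t_1,t_2))\big)=\mathbb{E}\Big(\tfrac{\sigma}{\sigma+u^\beta}\Big)^{N_{\alpha_1,\alpha_2}(t_1,t_2)}.
\]

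Next I would compute the Laplace transform of the left-hand side $\mathscr{Y}_{\alpha_1,\alpha_2}^\beta(t_1,t_2)$ defined in (\ref{tfcprfml}). Conditioning on $N_{\alpha_1,\alpha_2}(t_1,t_2)=n$ and using (\ref{mldistflap}) for the $n$-fold convolution of the Mittag-Leffler density, the conditional Laplace transform is $(\sigma/(u^\beta+\sigma))^n$; averaging over $N_{\alpha_1,\alpha_2}(t_1,t_2)$ gives the identical expression $\mathbb{E}(\sigma/(\sigma+u^\beta))^{N_{\alpha_1,\alpha_2}(t_1,t_2)}$. Since both transforms coincide for all $u>0$ and both random variables are supported on $[0,\infty)$, uniqueness of the Laplace transform yields the claimed equality in distribution.

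The argument is essentially a routine conditioning computation, so there is no serious obstacle; the only points requiring care are the independence hypotheses, namely that $H_\beta$ is independent of $\mathscr{Y}_{\alpha_1,\alpha_2}$ and that the FPRF is independent of the compounding variables, which legitimise the two conditioning steps, together with the appeal to uniqueness of the Laplace transform on $[0,\infty)$. One could also phrase the proof more structurally by observing that (\ref{tfcprfml}) is a random sum of independent copies of $H_\beta(E_j)$ and invoking the independent and stationary increments of the subordinator to pull the sum inside the time argument, but the Laplace transform route is the most direct.
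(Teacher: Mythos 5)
Your proposal is correct and follows essentially the same route as the paper: compute the Laplace transform of $H_\beta(\mathscr{Y}_{\alpha_1,\alpha_2}(t_1,t_2))$ by conditioning and (\ref{sublap}), recognize it as $\mathbb{E}\big(\sigma/(\sigma+u^\beta)\big)^{N_{\alpha_1,\alpha_2}(t_1,t_2)}$, match this with the Laplace transform of $\mathscr{Y}_{\alpha_1,\alpha_2}^\beta(t_1,t_2)$ via (\ref{mldistflap}), and conclude by uniqueness of the Laplace transform. Your write-up is merely a bit more explicit about the conditioning on $N_{\alpha_1,\alpha_2}(t_1,t_2)$ and the independence hypotheses, which the paper leaves implicit.
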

\begin{proof}	
	From (\ref{sublap}), it follows that the Laplace transform of $H_\beta(\mathscr{Y}_{\alpha_1,\alpha_2}(t_1,t_2))$ is given by
	\begin{align*}
		\mathbb{E}\exp(-uH_\beta(\mathscr{Y}(t_1,t_2)))&=\int_{0}^{\infty}\mathbb{E}e^{-uH_\beta(x)}\mathrm{Pr}\{\mathscr{Y}_{\alpha_1,\alpha_2}(t_1,t_2)\in\mathrm{d}x\}\\
		&=\int_{0}^{\infty}e^{-xu^\beta}\mathrm{Pr}\{\mathscr{Y}_{\alpha_1,\alpha_2}(t_1,t_2)\in\mathrm{d}x\}\\
		&=\mathbb{E}\bigg(\frac{\sigma}{u^\beta+\sigma}\bigg)^{N_{\alpha_1,\alpha_2}(t_1,t_2)}\\
		&=\mathbb{E}\exp(-u\mathscr{Y}_{\alpha_1,\alpha_2}^\beta(t_1,t_2))
	\end{align*}
	where the last step follows from (\ref{mldistflap}). The uniqueness of Laplace transform completes the proof.
\end{proof}

The following result follows from (\ref{sf1fold}) and by taking the appropriate fractional derivatives.
\begin{theorem}
	For $\alpha_2=1$, the distribution function of (\ref{tfcprfml}) is given by
	\begin{equation*}
		\mathrm{Pr}\{\mathscr{Y}_{\alpha_1,\alpha_2}^\beta(t_1,t_2)\leq y\}=E_{\alpha_1,1}(-\lambda t_1^{\alpha_1}t_2)\textbf{1}_{[0,\infty)}(y)+\int_{-\infty}^{y}f_{\mathscr{Y}_{\alpha_1,1}^\beta}(x,t_1,t_2)\,\mathrm{d}x,\ y\ge0,\ (t_1,t_2)\in\mathbb{R}^2_+,
	\end{equation*}
	where
	\begin{equation*}
	f_{\mathscr{Y}_{\alpha_1,1}^\beta}(y,t_1,t_2)=\frac{1}{y}\sum_{n=1}^{\infty}(\lambda\sigma t_1^{\alpha_1}t_2y^\beta)^nE_{\alpha_1,\alpha_1 n+1}^{n+1}(-\lambda t_1^{\alpha_1}t_2)E_{\beta,\beta n}^n(-\sigma y^\beta).
	\end{equation*}
	It solves
	\begin{equation*}
		\sigma\frac{\partial^{\alpha_1}}{\partial t_1^{\alpha_1}}f_{\mathscr{Y}_{\alpha_1,1}^\beta}(y,t_1,t_2)=-\bigg(\lambda t_1+\frac{\partial^{\alpha_1}}{\partial t_1^{\alpha_1}}\bigg)\frac{\partial^\beta}{\partial y^\beta}f_{\mathscr{Y}_{\alpha_1,1}^\beta}(y,t_1,t_2),
	\end{equation*}
	with $f_{\mathscr{Y}_{\alpha_1,1}^\beta}(y,0,t_2)=0$. Also,
	\begin{equation*}
		\int_{0}^{\infty}f_{\mathscr{Y}_{\alpha_1,1}^\beta}(y,t_1,t_2)\,\mathrm{d}y=1-E_{\alpha_1,1}(-\lambda t_1^{\alpha_1}t_2).
	\end{equation*}
\end{theorem}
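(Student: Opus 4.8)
The plan is to read the density off the general series \eqref{fcprfden} specialised to $\alpha_2=1$ with Mittag-Leffler compounding, inserting the $n$-fold convolution \eqref{sf1fold}. First I would isolate the $n=0$ term: since $S_0=0$, it contributes an atom at the origin of mass $p_{\alpha_1,1}(0,t_1,t_2)$, and from \eqref{fprfdist} with $(k)_k=k!$, $(k)_0=1$ and $\Gamma(k+1)=k!$ this mass telescopes to $\sum_{k\ge0}(-\lambda t_1^{\alpha_1}t_2)^k/\Gamma(k\alpha_1+1)=E_{\alpha_1,1}(-\lambda t_1^{\alpha_1}t_2)$, which is precisely the coefficient of the Heaviside term $\textbf{1}_{[0,\infty)}(y)$ in the stated distribution function.

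Next I would simplify the coefficients $p_{\alpha_1,1}(n,t_1,t_2)$ for $n\ge1$. In \eqref{fprfdist} I would write $(k)_{k-n}=k!/n!$, $(k)_n=k!/(k-n)!$ and $\Gamma(k+1)=k!$, reindex by $k=m+n$, factor out $(\lambda t_1^{\alpha_1}t_2)^n$, and use the rising-factorial identity $(m+n)!=n!\,(n+1)^{(m)}$ (with $(n+1)^{(m)}$ as in \eqref{tpml}) to cancel the $n!$; comparison with \eqref{tpml} then yields $p_{\alpha_1,1}(n,t_1,t_2)=(\lambda t_1^{\alpha_1}t_2)^n E_{\alpha_1,\alpha_1 n+1}^{n+1}(-\lambda t_1^{\alpha_1}t_2)$. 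Substituting this together with $f_{Y^\beta}^{*n}(y)=\sigma^n y^{\beta n-1}E_{\beta,\beta n}^n(-\sigma y^\beta)$ into the tail $\sum_{n\ge1}$ of \eqref{fcprfden} and extracting a factor $1/y$ reproduces the claimed closed form for $f_{\mathscr{Y}_{\alpha_1,1}^\beta}$. The normalisation then follows with no further work, since each $f_{Y^\beta}^{*n}$ is a probability density on $(0,\infty)$: integrating term by term gives $\int_0^\infty f_{\mathscr{Y}_{\alpha_1,1}^\beta}(y,t_1,t_2)\,\mathrm{d}y=\sum_{n\ge1}p_{\alpha_1,1}(n,t_1,t_2)=1-E_{\alpha_1,1}(-\lambda t_1^{\alpha_1}t_2)$.

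For the governing equation I would differentiate the series termwise. The variables separate, the $t_1$-dependence residing in $t_1^{\alpha_1 n}E_{\alpha_1,\alpha_1 n+1}^{n+1}(-\lambda t_1^{\alpha_1}t_2)$ and the $y$-dependence in $y^{\beta n-1}E_{\beta,\beta n}^{n}(-\sigma y^\beta)$, so the computation reduces to the standard fractional-differentiation formulas for $t^{\mu-1}E_{\alpha,\mu}^{\gamma}(ct^\alpha)$ and $y^{\nu-1}E_{\beta,\nu}^{\gamma}(-\sigma y^\beta)$, which lower the second Mittag-Leffler index by the order of the derivative and hence shift the summation index, allowing neighbouring terms to recombine into the asserted relation between $\partial^{\alpha_1}_{t_1}f$ and $\partial_y^\beta f$. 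An equivalent and arguably cleaner route is to take the Laplace transform in $y$: by \eqref{mldistflap} this turns $\partial_y^\beta$ into multiplication by the Laplace variable $u^\beta$, and the $y$-series collapses through the probability generating function coming from \eqref{fprflap} with $z=\sigma/(u^\beta+\sigma)$ to $E_{\alpha_1,1}\!\big(-\lambda t_1^{\alpha_1}t_2\, u^\beta/(u^\beta+\sigma)\big)$, after which the identity becomes a first-order fractional relation in $t_1$ verified by the eigenfunction property $\partial^{\alpha_1}_{t_1}E_{\alpha_1,1}(-ct_1^{\alpha_1})=-c\,E_{\alpha_1,1}(-ct_1^{\alpha_1})$ of the Caputo derivative.

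The governing equation is where I expect the real work to lie. Because $f_{\mathscr{Y}_{\alpha_1,1}^\beta}(y,t_1,t_2)\sim y^{\beta-1}$ as $y\to0^+$, the fractional $y$-derivative carries nonzero boundary contributions, and in the transform approach these are precisely the terms beyond the leading one that must be reconciled with the coefficient $\lambda t_1$ multiplying $\partial_y^\beta f$; simultaneously the Caputo derivative feeds in the value at $t_1=0$, which has to match the stated initial condition $f_{\mathscr{Y}_{\alpha_1,1}^\beta}(y,0,t_2)=0$. Keeping this boundary bookkeeping consistent while recombining the two families of three-parameter Mittag-Leffler functions is the delicate step, whereas the closed form and the normalisation are essentially algebra with \eqref{fprfdist}, \eqref{sf1fold} and \eqref{tpml}, paralleling the exponential-compounding case established earlier in this section.
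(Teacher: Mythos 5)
Your derivation of the distribution function, the closed form for $f_{\mathscr{Y}_{\alpha_1,1}^\beta}$, and the normalisation is correct and is exactly the paper's (essentially unwritten) route: the paper's whole proof is the sentence that the theorem follows from (\ref{sf1fold}) "by taking the appropriate fractional derivatives". Your computations — the atom $p_{\alpha_1,1}(0,t_1,t_2)=E_{\alpha_1,1}(-\lambda t_1^{\alpha_1}t_2)$, the collapse $p_{\alpha_1,1}(n,t_1,t_2)=(\lambda t_1^{\alpha_1}t_2)^nE_{\alpha_1,\alpha_1 n+1}^{n+1}(-\lambda t_1^{\alpha_1}t_2)$ via $(k)_{k-n}=k!/n!$, $(k)_n=k!/(k-n)!$ and the shift $k=m+n$, and termwise integration against the probability densities (\ref{sf1fold}) — supply precisely the details the paper omits, and they are all sound.

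The gap is in the governing equation, and it is not where you locate it. You plan to reconcile the $y=0$ boundary contributions of $\partial_y^\beta$ with the printed coefficient $\lambda t_1$; but your own transform route, pushed to completion, produces $\lambda t_2$, and no boundary bookkeeping can alter a coefficient in the time variables. Indeed, with $\tilde F(u,t_1,t_2)=E_{\alpha_1,1}\big(-\lambda t_1^{\alpha_1}t_2\,u^\beta/(u^\beta+\sigma)\big)$ (the transform of the full law, atom included), the eigenfunction property of the Caputo derivative gives
\begin{equation*}
\frac{\partial^{\alpha_1}}{\partial t_1^{\alpha_1}}\tilde F=-\lambda t_2\,\frac{u^\beta}{u^\beta+\sigma}\,\tilde F,
\qquad\text{hence}\qquad
\sigma\frac{\partial^{\alpha_1}}{\partial t_1^{\alpha_1}}\tilde F=-\bigg(\lambda t_2+\frac{\partial^{\alpha_1}}{\partial t_1^{\alpha_1}}\bigg)u^\beta\tilde F,
\end{equation*}
because here $\tilde F=E_{\alpha_1,1}(-ct_1^{\alpha_1})$ with $c=\lambda t_2\,u^\beta/(u^\beta+\sigma)$: differentiating in $t_1$ pulls out $\lambda t_2$, not $\lambda t_1$. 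This pairing — the differentiated time variable goes with $\lambda$ times the \emph{other} time variable — is confirmed by the paper itself: the exponential-compounding theorem pairs $\partial^{\alpha_2}_{t_2}$ with $\lambda t_1$, and the remark following it pairs $\partial_{t_1}$ with $\lambda t_2$ and $\partial_{t_2}$ with $\lambda t_1$. So the $\lambda t_1$ in the statement you are proving is a typo for $\lambda t_2$; a proof aimed at the literal statement, as yours is, fails at exactly this point, whereas finishing your own computation proves the corrected equation. (The boundary terms you flag are a real but separate matter: if the $y$-derivative of the density alone is taken in the Riemann--Liouville sense, its boundary term $\sigma p_{\alpha_1,1}(1,t_1,t_2)$, acted on by $\lambda t_2+\partial^{\alpha_1}_{t_1}$, exactly cancels the leftover $\sigma\lambda t_2\,p_{\alpha_1,1}(0,t_1,t_2)$ coming from the atom; they never touch the time coefficient.)
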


\begin{remark}
	Note that for $\alpha_1=\alpha_2=1$, the random field (\ref{tfcprfml}) reduces to the CPRF with Mittag-Leffler compounding variables
	\begin{equation*}
		\mathscr{Y}^\beta(t_1,t_2)\coloneqq\sum_{j=1}^{N(t_1,t_2)}Y_j^\beta,\ (t_1,t_2)\in\mathbb{R}^2_+.
	\end{equation*} 
	Its distribution function is given by
	\begin{equation*}
		\mathrm{Pr}\{\mathscr{Y}_\beta(t_1,t_2)\leq y\}=e^{-\lambda t_1t_2}\textbf{1}_{[0,\infty)}(y)+\int_{-\infty}^{y}f_{\mathscr{Y}_\beta}(x,t_1,t_2)\,\mathrm{d}x,\ y\in\mathbb{R},\ (t_1,t_2)\in\mathbb{R}^2_+.
	\end{equation*}
	Here,
	\begin{equation}\label{sf1den}
	f_{\mathscr{Y}_\beta}(y,t_1,t_2)=\frac{e^{-\lambda t_1t_2}}{y}\sum_{n=1}^{\infty}\frac{(\sigma\lambda t_1t_2 y^\beta)^n}{n!}E_{\beta,\beta n}^n(-\sigma y^\beta)\textbf{1}_{[0,\infty)}(y),
	\end{equation}
	which solves
	\begin{equation*}
		\sigma\frac{\partial}{\partial t_1}f_{\mathscr{Y}_\beta}(y,t_1,t_2)=-\bigg(\lambda t_2+\frac{\partial}{\partial t_1}\bigg)\frac{\partial^\beta}{\partial y^\beta}f_{\mathscr{Y}_\beta}(y,t_1,t_2)
	\end{equation*}
	and
	\begin{equation*}
		\sigma\frac{\partial}{\partial t_2}f_{\mathscr{Y}_\beta}(y,t_1,t_2)=-\bigg(\lambda t_1+\frac{\partial}{\partial t_2}\bigg)\frac{\partial^\beta}{\partial y^\beta}f_{\mathscr{Y}_\beta}(y,t_1,t_2),
	\end{equation*}
	with initial conditions $f_{\mathscr{Y}_\beta}(y,0,t_2)=0$ and $f_{\mathscr{Y}_\beta}(y,t_1,0)=0$, respectively. Also,
	\begin{equation*}
		\int_{0}^{\infty}f_{\mathscr{Y}_\beta}(y,t_1,t_2)\,\mathrm{d}y=1-e^{-\lambda t_1t_2}.
	\end{equation*}
	Moreover, we have the following time-changed relation:
	\begin{equation*}
		\mathscr{Y}_\beta(t_1,t_2)\overset{d}{=}H_\beta(\mathscr{Y}(t_1,t_2)),
	\end{equation*}
	where $\{H_\beta(t),\ t\ge0\}$ is the one parameter stable subordinator which is independent of the CPRF $\{\mathscr{Y}(t_1,t_2),\ (t_1,t_2)\in\mathbb{R}^2_+\}$ with exponential compounding.
\end{remark}

\section{Some fractional variants of PRF}\label{sec5}
 Here, we introduce and study  some fractional variants of the Poisson random field (PRF). First, we define a space fractional Poisson random field. 
 
 From (\ref{prfdist}), we note that
 \begin{equation*}
 	\lambda^2\frac{\partial}{\partial\lambda}p(n,t_1,t_2)=n\lambda p(n,t_1,t_2)-(n+1)\lambda p(n+1,t_1,t_2),\ n\ge0.
 \end{equation*} 
 So, the governing equations (\ref{prfeqs}) can be rewritten as follows:
\begin{align}\label{prfeqs1}
	\frac{\partial^2}{\partial t_2\partial t_1}p(n,t_1,t_2)
	&=(n+1)\lambda p(n+1,t_1,t_2)-n\lambda p(n,t_1,t_2)-\lambda p(n,t_1,t_2)+\lambda p(n-1,t_1,t_2)\nonumber\\
	&\ \ +(n-1)\lambda p(n-1,t_1,t_2)-n\lambda p(n,t_1,t_2)\nonumber\\
	&=-\lambda^2\frac{\partial}{\partial\lambda}p(n,t_1,t_2)-\lambda p(n,t_1,t_2)+\lambda p(n-1,t_1,t_2)+\lambda^2\frac{\partial}{\partial\lambda}p(n-1,t_1,t_2)\nonumber\\
	&=-\bigg(\lambda^2\frac{\partial}{\partial\lambda}+\lambda\bigg)(I-B)p(n,t_1,t_2),\ n\ge0,
\end{align}
where $B$ denotes the backward shift operator, that is, $Bp(n,t_1,t_2)=p(n-1,t_1,t_2)$.
\subsection{Space fractional PRF} Let us consider a random field $\{N_\alpha(t_1,t_2),\ (t_1,t_2)\in\mathbb{R}^2_+\}$, $0<\alpha\leq1$ whose state probabilities $p_\alpha(n,t_1,t_2)=\mathrm{Pr}\{N_\alpha(t_1,t_2)=n\}$, $n\ge0$ solve
\begin{equation}\label{sfprfequs}
\frac{\partial^2}{\partial t_2\partial t_1}p_\alpha(n,t_1,t_2)=-\bigg(\frac{\lambda^{\alpha+1}}{\alpha}\frac{\partial}{\partial\lambda}+\lambda^\alpha\bigg)(I-B)^\alpha p_\alpha(n,t_1,t_2),\ n\ge0,
\end{equation}
with $p_\alpha(0,0,t_2)=p_\alpha(0,t_1,0)=p_\alpha(0,0,0)=1$ for all $t_1\ge0$ and $t_2\ge0$, and $p(n,t_1,t_2)=0$ for all $n<0$. Here, 
$
(I-B)^\alpha=\sum_{k=0}^{\infty}{(\alpha)_k}(-B)^k/{k!}.
$

We call the process $\{N_\alpha(t_1,t_2),\ (t_1,t_2)\in\mathbb{R}^2_+\}$ as the space fractional Poisson random field (SFPRF). For $\alpha=1$, the system of differential equations (\ref{sfprfequs}) reduces to (\ref{prfeqs1}). Thus, $N_1(t_1,t_2)\overset{d}{=}N(t_1,t_2)$, where $\overset{d}{=}$ denotes the equality in distribution.

In the following result, we show that the SFPRF is equal is distribution to a one parameter Poisson process time-changed by an independent two parameter stable subordinator.
\begin{theorem}
	Let $\{N(t),\ t\ge0\}$ be a Poisson process with transition rate $\lambda>0$ which is independent of a two parameter stable subordinator $\{H_\alpha(t_1,t_2),\ (t_1,t_2)\in\mathbb{R}^2_+\}$, $0<\alpha<1$, defined in Section \ref{tpsub}. Then, the SFPRF has the following time-changed representation: 
	\begin{equation}\label{spt2}
		N_\alpha(t_1,t_2)\overset{d}{=} N(H_\alpha(t_1,t_2)),\ (t_1,t_2)\in\mathbb{R}^2_+.
	\end{equation}
\end{theorem}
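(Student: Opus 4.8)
The plan is to identify the law of $N(H_\alpha(t_1,t_2))$ by showing that its probability generating function (PGF) solves the generating-function form of the system (\ref{sfprfequs}) under the prescribed boundary data, and then to read off the state probabilities. First I would set $q(n,t_1,t_2)=\mathrm{Pr}\{N(H_\alpha(t_1,t_2))=n\}$ and form $G(u,t_1,t_2)=\sum_{n\ge0}u^nq(n,t_1,t_2)=\mathbb{E}\,u^{N(H_\alpha(t_1,t_2))}$. Conditioning on the value of the subordinator and using the elementary identity $\mathbb{E}\,u^{N(s)}=e^{-\lambda s(1-u)}$ for the one parameter Poisson process, I get $G(u,t_1,t_2)=\mathbb{E}\exp(-\lambda(1-u)H_\alpha(t_1,t_2))$. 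Substituting $w=\lambda(1-u)$ into the Laplace transform (\ref{tpsublap}) of the two parameter stable subordinator yields the closed form
\[
	G(u,t_1,t_2)=\exp\big(-t_1t_2\,\lambda^\alpha(1-u)^\alpha\big).
\]

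Next I would translate (\ref{sfprfequs}) into an equation for $G$. Multiplying (\ref{sfprfequs}) by $u^n$ and summing over $n\ge0$, the backward shift $B$ becomes multiplication by $u$, so $(I-B)^\alpha=\sum_k(\alpha)_k(-B)^k/k!$ becomes multiplication by $(1-u)^\alpha$, while the operator $\tfrac{\lambda^{\alpha+1}}{\alpha}\partial_\lambda+\lambda^\alpha$ passes through the sum unchanged. This gives
\[
	\frac{\partial^2}{\partial t_2\partial t_1}G=-\Big(\frac{\lambda^{\alpha+1}}{\alpha}\frac{\partial}{\partial\lambda}+\lambda^\alpha\Big)(1-u)^\alpha G.
\]
I would then verify the candidate $G$ by direct differentiation: writing $c=(1-u)^\alpha$, the left side equals $\big(t_1t_2\lambda^{2\alpha}c^2-\lambda^\alpha c\big)G$ (the cross term from the product rule in $\partial_{t_2}\partial_{t_1}$), and the right side produces the same expression, the $t_1t_2\lambda^{2\alpha}c^2$ term arising from $\partial_\lambda G=-t_1t_2\alpha\lambda^{\alpha-1}cG$. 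Hence both sides agree.

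Finally I would check the boundary data. Since $H_\alpha$ is a two parameter L\'evy process, $H_\alpha(0,t_2)=H_\alpha(t_1,0)=0$ almost surely, so the time-changed process starts at $N(0)=0$; thus $q(0,0,t_2)=q(0,t_1,0)=q(0,0,0)=1$ and $q(n,\cdot,\cdot)\equiv0$ for $n<0$, exactly the conditions imposed on $p_\alpha$. Because the PGF determines the law, extracting the coefficient of $u^n$ from $G$ recovers $p_\alpha(n,t_1,t_2)$ and establishes (\ref{spt2}).

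The step I expect to be the main obstacle is justifying the term-by-term replacement $B\mapsto u$ inside the fractional operator $(I-B)^\alpha$: one must argue that the binomial series, applied to the summable family $\{q(n,\cdot,\cdot)\}$, converges and may be interchanged with the sum defining $G$ for $|u|\le1$, so that the identity $\sum_n u^n(I-B)^\alpha q(n,\cdot,\cdot)=(1-u)^\alpha G$ is rigorous rather than merely formal. Relatedly, to conclude from matching PGFs one must know that the system (\ref{sfprfequs}) together with the stated initial conditions admits a unique generating function, so that consistency of $G$ with the equation is in fact conclusive.
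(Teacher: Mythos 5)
Your proposal is correct and follows essentially the same route as the paper: both compute the pgf of $N(H_\alpha(t_1,t_2))$ via conditioning and the subordinator's Laplace transform to get $\exp(-t_1t_2\lambda^\alpha(1-u)^\alpha)$, and both match this against the governing system (\ref{sfprfequs}) through the correspondence $B\mapsto u$, $(I-B)^\alpha\mapsto(1-u)^\alpha$ (the paper differentiates the pgf and expands back into coefficients of $u^n$, while you convert the system to a pgf equation and verify directly — a trivial reordering of the same computation). Your closing remarks on justifying the series interchange and on uniqueness of the solution to (\ref{sfprfequs}) flag points the paper itself passes over silently.
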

\begin{proof}
	The pgf  $G_\alpha(u,t_1,t_2)=\mathbb{E}u^{N(H_\alpha(t_1,t_2))}$, $0<u\leq1$ of right hand side of (\ref{tpsublap}) is given by
	\begin{equation}\label{spt2pgf}
		G_\alpha(u,t_1,t_2)=\int_{0}^{\infty}e^{\lambda x(u-1)}\mathrm{Pr}\{S_\alpha(t_1,t_2)\in\mathrm{d}x\}=\exp\big(-t_1t_2\lambda^\alpha(1-u)^\alpha\big),
	\end{equation}
	where we have used the pgf of Poisson process, that is,  $\mathbb{E}u^{N(t)}=e^{\lambda t(u-1)}$.	
	
	On taking the derivative of (\ref{spt2pgf}) with respect to $t_1$, we get
	\begin{equation*}
		\frac{\partial}{\partial t_1}G_\alpha(u,t_1,t_2)=-t_2\lambda^\alpha(1-u)^\alpha e^{-t_1t_2\lambda^\alpha(1-u)^\alpha},
	\end{equation*}
	whose derivative with respect to $t_2$ yields
	\begin{align}
		\frac{\partial^2}{\partial t_2\partial t_1}G_\alpha(u,t_1,t_2)&=t_1t_2(\lambda^\alpha(1-u)^\alpha)^2e^{-t_1t_2\lambda^\alpha(1-u)^\alpha}-\lambda^\alpha(1-u)^\alpha e^{-t_1t_2\lambda^\alpha(1-u)^\alpha}\nonumber\\
		&=-\frac{\lambda^{\alpha+1}(1-u)^{\alpha}}{\alpha}\frac{\partial}{\partial \lambda}e^{-t_1t_2\lambda^\alpha(1-u)^\alpha}-\lambda^\alpha(1-u)^\alpha e^{-t_1t_2\lambda^\alpha(1-u)^\alpha}\nonumber\\
		&=-\frac{\lambda^{\alpha+1}(1-u)^{\alpha}}{\alpha}\frac{\partial}{\partial \lambda}G_\alpha(u,t_1,t_2)-\lambda^\alpha(1-u)^\alpha G_\alpha(u,t_1,t_2).\label{sfprfpgfeq}
\end{align}
On substituting $G_\alpha(u,t_1,t_2)=\sum_{n=0}^{\infty}u^np_\alpha(n,t_1,t_2)$ in (\ref{sfprfpgfeq}), we obtain
\begin{align}
			\sum_{n=0}^{\infty}u^n\frac{\partial^2}{\partial t_2\partial t_1}p_\alpha(n,t_1,t_2)&=-\bigg(\frac{\lambda^{\alpha+1}}{\alpha}\frac{\partial}{\partial\lambda}+\lambda^\alpha\bigg)\sum_{k=0}^{\infty}\frac{(\alpha)_k}{k!}(-u)^k\sum_{n=0}^{\infty}u^np_\alpha(n,t_1,t_2)\nonumber\\
		&=-\bigg(\frac{\lambda^{\alpha+1}}{\alpha}\frac{\partial}{\partial\lambda}+\lambda^\alpha\bigg)\sum_{k=0}^{\infty}(-1)^k\frac{(\alpha)_k}{k!}\sum_{n=k}^{\infty}u^np_\alpha(n-k,t_1,t_2)\nonumber\\
		&=-\bigg(\frac{\lambda^{\alpha+1}}{\alpha}\frac{\partial}{\partial\lambda}+\lambda^\alpha\bigg)\sum_{n=0}^{\infty}u^n\sum_{k=0}^{n}(-1)^k\frac{(\alpha)_k}{k!}p_\alpha(n-k,t_1,t_2).\label{thm3pf1}
	\end{align}
	On comparing the coefficient of $u^n$ on both sides of (\ref{thm3pf1}), we get 
	\begin{align*}
			\frac{\partial^2}{\partial t_2\partial t_1}p_\alpha(n,t_1,t_2)&=-\bigg(\frac{\lambda^{\alpha+1}}{\alpha}\frac{\partial}{\partial\lambda}+\lambda^\alpha\bigg)\sum_{k=0}^{n}(-1)^k\frac{(\alpha)_k}{k!}p_\alpha(n-k,t_1,t_2)\\
			&=-\bigg(\frac{\lambda^{\alpha+1}}{\alpha}\frac{\partial}{\partial\lambda}+\lambda^\alpha\bigg)\sum_{k=0}^{\infty}\frac{(\alpha)_k}{k!}(-B)^kp_\alpha(n,t_1,t_2)\\
			&=-\bigg(\frac{\lambda^{\alpha+1}}{\alpha}\frac{\partial}{\partial\lambda}+\lambda^\alpha\bigg)(I-B)^\alpha p_\alpha(n,t_1,t_2),
	\end{align*}
	which coincide with (\ref{sfprfequs}). This completes the proof.
\end{proof}
\begin{remark}
The pgf (\ref{spt2pgf}) can be rewritten as follows:	
	\begin{equation}\label{thm3pf0}
		G_\alpha(u,t_1,t_2)=\sum_{k=0}^{\infty}\frac{(-t_1t_2\lambda^\alpha)^k}{k!}(1-u)^{\alpha k}=\sum_{k=0}^{\infty}\frac{(-t_1t_2\lambda^\alpha)^k}{k!}\sum_{n=0}^{\infty}\frac{(\alpha k)_n}{n!}(-u)^n.
	\end{equation}
	In view of (\ref{spt2}), the coefficient of $u^n$ in (\ref{thm3pf0}) is the distribution of SFPRF. It is given by
	\begin{equation}\label{spt2dist}
		p_\alpha(n,t_1,t_2)=\sum_{k=0}^{\infty}(-1)^{n+k}\frac{(\alpha k)_n(t_1t_2\lambda^\alpha)^k}{n!k!},\ n\ge0,
	\end{equation}
	where $(\alpha k)_n=\alpha k(\alpha k-1)\dots(\alpha k-n+1)=\Gamma(\alpha k+1)/\Gamma(\alpha k+1-n)$.

In particular, for $\alpha=1$, the distribution (\ref{spt2dist}) reduces to 
	\begin{align*}
		p_1(n,t_1,t_2)&=\sum_{k=0}^{\infty}(-1)^{n+k}\frac{(k)_n(t_1t_2\lambda)^k}{n!k!}\\
		&=\sum_{k=n}^{\infty}(-1)^{n+k}\frac{(t_1t_2\lambda)^k}{(k-n)!n!}=\frac{(t_1t_2\lambda)^n}{n!}e^{-t_1t_2\lambda},\ n\ge0,
	\end{align*}
	which agrees with the distribution of PRF given (\ref{prfdist}).
\end{remark}

Next, we derive the governing equation for the density of two parameter stable subordinator.
\begin{proposition}
	The density $g_\alpha(x,t_1,t_2)=\mathrm{Pr}\{H_\alpha(t_1,t_2)\in\mathrm{d}x\}/\mathrm{d}x$, $x\ge0$, $(t_1,t_2)\in\mathbb{R}^2_+$ solves the following fractional differential equation:
	\begin{equation}\label{subdeneq}
		\frac{\partial}{\partial t_1}g_\alpha(x,t_1,t_2)=-t_2\frac{\partial^\alpha}{\partial x^\alpha}g_\alpha(x,t_1,t_2),
	\end{equation}
	with $g_\alpha(0,t_1,t_2)=0$ and $g_\alpha(x,0,t_2)=\delta(x)$. Here, $\frac{\partial^\alpha}{\partial x^\alpha}$ is the Caputo fractional derivative and $\delta(x)$ denotes the Dirac delta function.
\end{proposition}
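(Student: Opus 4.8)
The plan is to work entirely on the Laplace transform side in the spatial variable $x$ and thereby reduce the claimed fractional differential equation to an elementary identity. Writing $\tilde{g}_\alpha(u,t_1,t_2)=\int_0^\infty e^{-ux}g_\alpha(x,t_1,t_2)\,\mathrm{d}x$ for the $x$-Laplace transform of the density, the defining relation (\ref{tpsublap}) gives at once
\[
\tilde{g}_\alpha(u,t_1,t_2)=\mathbb{E}e^{-uH_\alpha(t_1,t_2)}=e^{-t_1t_2u^\alpha},\quad u>0.
\]
This closed form is the engine of the whole argument.

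First I would transform the right-hand side of (\ref{subdeneq}). Applying the Laplace transform rule (\ref{caputoderlap}) for the Caputo derivative, now read in the variable $x$ with transform variable $u$, yields
\[
\int_0^\infty e^{-ux}\frac{\partial^\alpha}{\partial x^\alpha}g_\alpha(x,t_1,t_2)\,\mathrm{d}x=u^\alpha\tilde{g}_\alpha(u,t_1,t_2)-u^{\alpha-1}g_\alpha(0^+,t_1,t_2).
\]
The boundary condition $g_\alpha(0,t_1,t_2)=0$, which holds for the stable subordinator density when $0<\alpha<1$, annihilates the second term and leaves $u^\alpha\tilde{g}_\alpha(u,t_1,t_2)$.

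Next I would transform the left-hand side. Differentiating the explicit form $\tilde{g}_\alpha(u,t_1,t_2)=e^{-t_1t_2u^\alpha}$ in $t_1$ gives $\partial_{t_1}\tilde{g}_\alpha=-t_2u^\alpha e^{-t_1t_2u^\alpha}=-t_2u^\alpha\tilde{g}_\alpha$, which is exactly $-t_2$ times the transformed right-hand side obtained above. Thus both sides of (\ref{subdeneq}) share the same $x$-Laplace transform for every $u>0$, and uniqueness of the Laplace transform gives the identity in $x$-space. The initial condition $g_\alpha(x,0,t_2)=\delta(x)$ then follows by setting $t_1=0$, where the transform collapses to $\tilde{g}_\alpha(u,0,t_2)=1$, the Laplace transform of the Dirac delta.

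The only delicate point I anticipate is justifying the two steps that are formal above: that $g_\alpha(\cdot,t_1,t_2)$ is regular enough for the Caputo derivative and its transform rule (\ref{caputoderlap}) to apply, and that $\partial_{t_1}$ commutes with the $x$-integral defining $\tilde{g}_\alpha$. Both are standard for stable densities, since $g_\alpha$ is smooth on $(0,\infty)$ with rapidly decaying tails, so this amounts to invoking the known regularity rather than confronting a genuine obstacle.
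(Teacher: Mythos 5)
Your proposal is correct and follows essentially the same route as the paper: both arguments work with the $x$-Laplace transform $\tilde{g}_\alpha(u,t_1,t_2)$, apply the Caputo transform rule (\ref{caputoderlap}) with the boundary condition $g_\alpha(0,t_1,t_2)=0$, and invoke the explicit form $e^{-t_1t_2u^\alpha}$ from (\ref{tpsublap}) together with uniqueness of the Laplace transform. The only cosmetic difference is direction: the paper transforms the claimed equation into the ODE $\partial_{t_1}\tilde{g}_\alpha=-t_2u^\alpha\tilde{g}_\alpha$ and solves it forward to recover $e^{-t_1t_2u^\alpha}$, whereas you start from that closed form and verify it satisfies the transformed equation.
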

\begin{proof}
	In view of (\ref{caputoderlap}) and on taking the Laplace transform $\tilde{g}_\alpha(u,t_1,t_2)=\int_{0}^{\infty}e^{-ux}g_\alpha(x,t_1,t_2)\,\mathrm{d}x$ on both sides of (\ref{subdeneq}), we get
	\begin{equation}\label{pf1}
		\frac{\partial}{\partial t_1}\tilde{g}_\alpha(u,t_1,t_2)=-t_2u^{\alpha}\tilde{g}_\alpha(u,t_1,t_2)
	\end{equation}
	with $\tilde{g}_\alpha(u,0,t_2)=1$. On solving (\ref{pf1}), we obtain $\tilde{g}_\alpha(u,t_1,t_2)=e^{-t_1t_2u^\alpha}$, $u>0$. This completes the proof.
\end{proof}
\subsection{Space-time fractional PRF} For $\alpha_i\in(0,1]$, $i=1,2$ and $0<\beta\leq1$, let us consider a random field $\{N_{\alpha_1,\alpha_2}^\beta(t_1,t_2),\ (t_1,t_2)\in\mathbb{R}^2_+\}$ whose distribution $p_{\alpha_1,\alpha_2}^\beta(n,t_1,t_2)=\mathrm{Pr}\{N_{\alpha_1,\alpha_2}^\beta(t_1,t_2)=n\}$ solves the following system of fractional differential equations:
\begin{equation}\label{stf}
\frac{\partial^{\alpha_1+\alpha_2}}{\partial t_2^{\alpha_2}\partial t_1^{\alpha_1}}p_{\alpha_1,\alpha_2}^\beta(n,t_1,t_2)=-\bigg(\frac{\lambda^{\beta+1}}{\beta}\frac{\partial}{\partial\lambda}+\lambda^\beta\bigg)(I-B)^\beta p_{\alpha_1,\alpha_2}^\beta(n,t_1,t_2),\ n\ge0,
\end{equation}
with $p_{\alpha_1,\alpha_2}^\beta(0,0,t_2)=p_{\alpha_1,\alpha_2}^\beta(0,t_1,0)=p_{\alpha_1,\alpha_2}^\beta(0,0,0)=1$. In (\ref{stf}), the derivative involve is the Caputo fractional derivative defined in (\ref{caputoder}). We call it the space-time fractional Poisson random field (STFPRF). For $\beta=1$, it reduces to the time fractional Poisson random field (TFPRF) introduced and studied in \cite{Kataria2024}. Also, for $\alpha_1=\alpha_2=1$, it reduces to the SFPRF.

The pgf $G_{\alpha_1,\alpha_2}^\beta(u,t_1,t_2)=\sum_{n=0}^{\infty}u^np_{\alpha_1,\alpha_2}^\beta(n,t_1,t_2)$ solves the following fractional differential equation:
\begin{equation}
	\frac{\partial^{\alpha_1+\alpha_2}}{\partial t_2^{\alpha_2}\partial t_1^{\alpha_1}}G_{\alpha_1,\alpha_2}^\beta(u,t_1,t_2)=-\bigg(\frac{\lambda^{\beta+1}}{\beta}\frac{\partial}{\partial \lambda}+\lambda^\beta\bigg)(1-u)^\beta G_{\alpha_1,\alpha_2}^\beta(u,t_1,t_2)
\end{equation}
with $G_{\alpha_1,\alpha_2}^\beta(u,0,t_2)=G_{\alpha_1,\alpha_2}^\beta(u,t_1,0)=1$.
\begin{theorem}\label{thm5}
	Let $\{T_{2\alpha_i}(t),\ t\ge0\}$, $0<\alpha_i\leq1$, $i=1,2$ 
	be two random processes whose corresponding densities are folded solutions of the following Cauchy problems:
	\begin{equation}\label{couchyp}
		\frac{\partial^{2\alpha_i}}{\partial t^{2\alpha_i}}u_{2\alpha_i}(x,t)=\frac{\partial^2}{\partial x^2}u_{2\alpha_i}(x,t),\ t>0,\,x\in\mathbb{R},
	\end{equation}	
	with initial condition $u_{2\alpha_i}(x,0)=\delta(x),\ 0<\alpha_i\leq1$, and $(\partial/\partial t)u_{2\alpha_i}(x,t)|_{t=0}=0,\ \frac{1}{2}<\alpha_i\leq1$.	
	Also, let $\{H_\beta(t_1,t_2),\ (t_1,t_2)\in\mathbb{R}^2_+\}$ be a two parameter stable subordinator. Then, the STFPRF satisfies the following time-change relationships: 
	\begin{equation}\label{sfprfrep}
		N_{\alpha_1,\alpha_2}^\beta(t_1,t_2)\overset{d}{=}N_\beta(T_{2\alpha_1}(t_1),T_{2\alpha_2}(t_2))\overset{d}{=}N(H_\beta(T_{2\alpha_1}(t_1),T_{2\alpha_2}(t_2))),\ (t_1,t_2)\in\mathbb{R}^2_+,
	\end{equation}
	where $\{N(t),\ t\ge0\}$ and $\{N_\beta(t_1,t_2),\ (t_1,t_2)\in\mathbb{R}^2_+\}$ are the Poisson process and the SFPRF, respectively. In (\ref{sfprfrep}), the component processes involved are independent of each other.
\end{theorem}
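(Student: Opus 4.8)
The plan is to verify the two distributional identities in (\ref{sfprfrep}) separately. The second identity $N_\beta(T_{2\alpha_1}(t_1),T_{2\alpha_2}(t_2))\overset{d}{=}N(H_\beta(T_{2\alpha_1}(t_1),T_{2\alpha_2}(t_2)))$ is the easier half: since $T_{2\alpha_1},T_{2\alpha_2},H_\beta$ and $N$ are mutually independent, I would condition on the pair $(T_{2\alpha_1}(t_1),T_{2\alpha_2}(t_2))=(\tau_1,\tau_2)$ and apply the already-established SFPRF time-change representation (\ref{spt2}), namely $N_\beta(\tau_1,\tau_2)\overset{d}{=}N(H_\beta(\tau_1,\tau_2))$, at the deterministic point $(\tau_1,\tau_2)$; integrating the resulting pointwise equality of probability generating functions (pgf) against the law of $(T_{2\alpha_1}(t_1),T_{2\alpha_2}(t_2))$ yields the claim. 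So the substance lies in the first identity $N_{\alpha_1,\alpha_2}^\beta(t_1,t_2)\overset{d}{=}N_\beta(T_{2\alpha_1}(t_1),T_{2\alpha_2}(t_2))$.

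The conceptual heart is to identify each time-change process $T_{2\alpha_i}$ with an inverse stable subordinator. Writing $\mathfrak{u}_{2\alpha}(x,t)$ for the folded density on $[0,\infty)$ associated with the solution of (\ref{couchyp}), I would Laplace transform (\ref{couchyp}) in $t$ using (\ref{caputoderlap}) for the Caputo derivative of order $2\alpha$. The key point is that the two prescribed initial conditions $u_{2\alpha}(x,0)=\delta(x)$ and $\partial_t u_{2\alpha}(x,0)=0$ make the two ranges $0<2\alpha\le1$ and $1<2\alpha\le2$ collapse to the same transformed equation $\partial_x^2\tilde u=z^{2\alpha}\tilde u-z^{2\alpha-1}\delta(x)$; solving this ODE in $x$ on $\mathbb{R}$ with decay at $\pm\infty$ and the jump condition at the origin gives $\tilde u(x,z)=\tfrac12 z^{\alpha-1}e^{-z^\alpha|x|}$. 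Folding then produces $\int_0^\infty e^{-zt}\mathfrak{u}_{2\alpha}(x,t)\,\mathrm{d}t=z^{\alpha-1}e^{-z^\alpha x}$, which is exactly (\ref{insublap}). Hence $T_{2\alpha_i}(t)\overset{d}{=}L_{\alpha_i}(t)$, the inverse $\alpha_i$-stable subordinator (and for $\alpha_i=1$ this degenerates to $T_2(t)=t$, consistent with the reduction of the STFPRF to the SFPRF).

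With this identification I would compute the pgf of the right-hand side by conditioning on the independent time changes:
\begin{equation*}
\tilde G(u,t_1,t_2)=\iint_{\mathbb{R}_+\times\mathbb{R}_+}G_\beta(u,x_1,x_2)\,\mathfrak{u}_{2\alpha_1}(x_1,t_1)\,\mathfrak{u}_{2\alpha_2}(x_2,t_2)\,\mathrm{d}x_1\,\mathrm{d}x_2,
\end{equation*}
where $G_\beta(u,x_1,x_2)=\exp(-x_1x_2\lambda^\beta(1-u)^\beta)$ is the pgf of the SFPRF $N_\beta$ read off from (\ref{spt2pgf}). It then remains to show that $\tilde G$ satisfies the governing pgf equation for $G_{\alpha_1,\alpha_2}^\beta$ stated just after (\ref{stf}), together with $\tilde G(u,0,t_2)=\tilde G(u,t_1,0)=1$ (immediate from $T_{2\alpha_i}(0)=0$). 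I would take the double Laplace transform in $(t_1,t_2)\to(z_1,z_2)$, evaluate the left side with (\ref{caputoderlap}) applied in each variable, and evaluate the integrals using the inverse-stable-subordinator Laplace transform (\ref{insublap}) — a computation closely parallel to the double-Laplace argument already carried out for $\tilde{\mathscr{X}}_{\alpha_1,\alpha_2}^{(n)}$ in Section \ref{sec4}. Matching the two sides and invoking uniqueness of the Laplace transform identifies $\tilde G$ with $G_{\alpha_1,\alpha_2}^\beta$, which gives the first equality in distribution.

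The main obstacle is the simultaneous two-parameter fractional manipulation in this last step: one must show that composing in the $t_i$-variable with $T_{2\alpha_i}\overset{d}{=}L_{\alpha_i}$ converts the ordinary $t_i$-derivative governing the SFPRF into the Caputo derivative $\partial^{\alpha_i}/\partial t_i^{\alpha_i}$, that this occurs coherently in both coordinates at once, and that the non-local operator $\frac{\lambda^{\beta+1}}{\beta}\frac{\partial}{\partial\lambda}+\lambda^\beta$ acting in the $\lambda$-variable passes through untouched. Care is needed to justify differentiating under the integral sign and interchanging the fractional $t$-derivatives with the $x$-integrations against $\mathfrak{u}_{2\alpha_i}$; working throughout at the level of (double) Laplace transforms, where these operations become algebraic, is the cleanest way to sidestep the attendant regularity issues.
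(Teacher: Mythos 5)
Your proposal is correct and follows essentially the same route as the paper: compute the pgf of $N_\beta(T_{2\alpha_1}(t_1),T_{2\alpha_2}(t_2))$ by conditioning, pass to the double Laplace transform in $(t_1,t_2)$ using the folded-solution transform $\int_0^\infty e^{-z_it_i}\mathrm{Pr}(T_{2\alpha_i}(t_i)\in\mathrm{d}x_i)\,\mathrm{d}t_i=z_i^{\alpha_i-1}e^{-z_i^{\alpha_i}x_i}\,\mathrm{d}x_i$, show it satisfies the same transformed equation as $G_{\alpha_1,\alpha_2}^\beta$ obtained via the Caputo Laplace formula, and conclude by uniqueness, with the second identity in \eqref{sfprfrep} reduced to \eqref{spt2} by conditioning. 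The only departure is cosmetic: you re-derive the folded-solution Laplace transform (equivalently, the identification $T_{2\alpha_i}\overset{d}{=}L_{\alpha_i}$) directly from the Cauchy problem, where the paper simply cites Eq.~(3.3) of Orsingher and Beghin (2004).
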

\begin{proof}
The pgf of right hand side of (\ref{sfprfrep}) is given by
\begin{equation*}
	\mathbb{E}u^{N_\beta(T_{2\alpha_1}(t_1),T_{2\alpha_2}(t_2))}=\iint_{\mathbb{R}_+\times\mathbb{R}_+}G_\beta(u,x_1,x_2)\mathrm{Pr}\{T_{2\alpha_1}(t_1)\in\mathrm{d}x_1\}\mathrm{Pr}\{T_{2\alpha_2}(t_2)\in\mathrm{d}x_2\},
\end{equation*}
where $G_\beta(u,x_1,x_2)$ is the pgf of SFPRF. Its double Laplace transform with respect to $t_1$ and $t_2$ is given by
\begin{equation}\label{thm4pf0}
	\mathcal{L}(u,z_1,z_2)=z_1^{\alpha_1-1}z_2^{\alpha_2-1}\iint_{\mathbb{R}_+\times\mathbb{R}_+}G_\beta(u,x_1,x_2)e^{-z_1^{\alpha_1}x_1}e^{-z_2^{\alpha_2}x_2}\,\mathrm{d}x_1\,\mathrm{d}x_2,\ z_i>0,\ i=1,2,
\end{equation}
where we have used the following result (see \cite{Orsingher2004}, Eq. (3.3)):
\begin{equation}\label{tclap}
	\int_{0}^{\infty}e^{-z_it_i}\mathrm{Pr}(T_{2\alpha_i}(t_i)\in\mathrm{d}x_i)\,\mathrm{d}t_i=z_i^{\alpha_i-1}e^{-z_i^{\alpha_i}x_i}\,\mathrm{d}x_i,\ i=1,2.
\end{equation}

For $\alpha=\beta$, on taking the Laplace transform on both sides of (\ref{sfprfpgfeq}) with respect to $t_1$, we get
\begin{equation*}
	z_1\frac{\partial}{\partial t_2}\int_{0}^{\infty}e^{-z_1t_2}G_\beta(u,t_1,t_2)\,\mathrm{d}t_1=
	-\bigg(\frac{\lambda^{\beta+1}}{\beta}\frac{\partial}{\partial \lambda}+\lambda^\beta\bigg)\int_{0}^{\infty}e^{-z_1t_1}G_\beta(u,t_1,t_2)\,\mathrm{d}t_1,
\end{equation*}
where we have used $G_\beta(u,0,t_2)=1$. Its Laplace transform with respect to $t_2$ yields
\begin{align*}
	z_1z_2\iint_{\mathbb{R}_+\times\mathbb{R}_+}e^{-z_1t_1-z_2t_2}&G_\beta(u,t_1,t_2)\,\mathrm{d}t_1\,\mathrm{d}t_2-1\nonumber\\
	&=-\bigg(\frac{\lambda^{\beta+1}}{\beta}\frac{\partial}{\partial \lambda}+\lambda^\beta\bigg)\iint_{\mathbb{R}_+\times\mathbb{R}_+}e^{-z_1t_1-z_2t_2}G_\beta(u,t_1,t_2)\,\mathrm{d}t_1\,\mathrm{d}t_1,
\end{align*}
where we have used $\int_{0}^{\infty}e^{-z_1t_1}G_\beta(u,t_1,0)\,\mathrm{d}t_1=1/z_1$. So,
\begin{align}
	z_1^{\alpha_1-1}z_2^{\alpha_2-1}&z_1^{\alpha_1}z_2^{\alpha_2}\iint_{\mathbb{R}_+\times\mathbb{R}_+}e^{-z_1^{\alpha_1}t_1-z_2^{\alpha_2}t_2}G_\beta(u,t_1,t_2)\,\mathrm{d}t_1\,\mathrm{d}t_2-z_1^{\alpha_1-1}z_2^{\alpha_2-1}\nonumber\\
	&=-\bigg(\frac{\lambda^{\beta+1}}{\beta}\frac{\partial}{\partial \lambda}+\lambda^\beta\bigg)z_1^{\alpha_1-1}z_2^{\alpha_2-1}\iint_{\mathbb{R}_+\times\mathbb{R}_+}e^{-z_1^{\alpha_1}t_1-z_2^{\alpha_2}t_2}G_\beta(u,t_1,t_2)\,\mathrm{d}t_1\,\mathrm{d}t_2.\label{thm4pf1}
\end{align}
On using (\ref{thm4pf0}) in (\ref{thm4pf1}), we have
\begin{equation}\label{thm4pf3}
	z_1^{\alpha_1}z_2^{\alpha_2}\mathcal{L}(u,z_1,z_2)-z_1^{\alpha_1-1}z_2^{\alpha_2-1}=-\bigg(\frac{\lambda^{\beta+1}}{\beta}\frac{\partial}{\partial \lambda}+\lambda^\beta\bigg)\mathcal{L}(u,z_1,z_2).
\end{equation}

Now, on taking the double Laplace transform with respect to $t_1$ and $t_2$ on both sides of (\ref{sfprfpgfeq}), we get
\begin{align}
	z_1^{\alpha_1}z_2^{\alpha_2}\iint_{\mathbb{R}_+\times\mathbb{R}_+}&e^{-z_1t_1-z_2t_2}G_{\alpha_1,\alpha_2}^\beta(u,t_1,t_2)\,\mathrm{d}t_1\,\mathrm{d}t_2-z_1^{\alpha_1-1}z_2^{\alpha_2-1}\nonumber\\
	&=-\bigg(\frac{\lambda^{\beta+1}}{\beta}\frac{\partial}{\partial \lambda}+\lambda^\beta\bigg)\iint_{\mathbb{R}_+\times\mathbb{R}_+}e^{-z_1t_1-z_2t_2}G_{\alpha_1,\alpha_2}^\beta(u,t_1,t_2)\,\mathrm{d}t_1\,\mathrm{d}t_2.\label{thm4pf4}
\end{align}
From (\ref{thm4pf3}) and (\ref{thm4pf4}), it follows that $\iint_{\mathbb{R}_+\times\mathbb{R}_+}e^{-z_1t_1-z_2t_2}G_{\alpha_1,\alpha_2}^\beta(u,t_1,t_2)\,\mathrm{d}t_1\,\mathrm{d}t_2$ solves (\ref{thm4pf3}). Thus, the proof of first equality in distribution of (\ref{sfprfrep}) follows from the uniqueness of pgf. Also, the second equality in distribution is a consequence of (\ref{spt2}). This completes the proof.
\end{proof}
\begin{remark}\label{rem5}
	The Laplace transform of the composition $H_\beta(T_{2\alpha_1}(t_1),T_{2\alpha_2}(t_2))$ is given by
	\begin{equation*}
		\mathbb{E}\exp(-uH_\beta(T_{2\alpha_1}(t_1),T_{2\alpha_2}(t_2)))=\iint_{\mathbb{R}_+\times\mathbb{R}_+}e^{-x_1x_2u^\beta}\mathrm{Pr}\{T_{2\alpha_1}(t_1)\in\mathrm{d}x_1\}\mathrm{Pr}\{T_{2\alpha_2}(t_2)\in\mathrm{d}x_2\}.
	\end{equation*}
	By using (\ref{tclap}), its double Laplace transform is 
	\begin{align*}
	\iint_{\mathbb{R}_+\times\mathbb{R}_+}e^{-z_1t_1-z_2t_2}\mathbb{E}\exp(-uH_\beta(T_{2\alpha_1}&(t_1),T_{2\alpha_2}(t_2)))\,\mathrm{d}t_1\,\mathrm{d}t_2\\
	&=z_1^{\alpha_1-1}z_2^{\alpha_2-1}\iint_{\mathbb{R}_+\times\mathbb{R}_+}e^{-x_1x_2u^\beta}e^{-z_1^{\alpha_1}x_1}e^{-z_2^{\alpha_2}x_2}\,\mathrm{d}x_1\,\mathrm{d}x_2\\
	&=z_2^{\alpha_2-1}\int_{0}^{\infty}\frac{z_1^{\alpha_1-1}}{z_1^{\alpha_1}+x_2u^\beta}e^{-z_2^{\alpha_2}x_2}\,\mathrm{d}x_2,\ z_1>0,\, z_2>0,
	\end{align*}
	whose inversion with respect to $z_1$ yields
	\begin{align*}
		\int_{0}^{\infty}e^{-z_2t_2}\mathbb{E}\exp(-uH_\beta(T_{2\alpha_1}(t_1),T_{2\alpha_2}(t_2)))\,\mathrm{d}t_2&=z_2^{\alpha_2-1}\int_{0}^{\infty}E_{\alpha_1,1}(-x_2u^\beta t_1^{\alpha_1})e^{-z_2^{\alpha_2}x_2}\,\mathrm{d}x_2\\
		&=z_2^{\alpha_2-1}\sum_{k=0}^{\infty}\frac{(-u^\beta t_1^{\alpha_1})^k}{\Gamma(k\alpha_1+1)}\int_{0}^{\infty}x_2^ke^{-z_2^{\alpha_2}x_2}\,\mathrm{d}x_2\\
		&=\sum_{k=0}^{\infty}\frac{(-u^\beta t_1^{\alpha_1})^k}{\Gamma(k\alpha_1+1)}\frac{\Gamma(k+1)}{z_2^{\alpha_2 k+1}}.
	\end{align*}
	Hence,
	\begin{equation*}
		\mathbb{E}\exp(-uH_\beta(T_{2\alpha_1}(t_1),T_{2\alpha_2}(t_2)))=\sum_{k=0}^{\infty}\frac{\Gamma(k+1)(-u^\beta t_1^{\alpha_1}t_2^{\alpha_2})^k}{\Gamma(k\alpha_1+1)\Gamma(k\alpha_2+1)}={}_2\Psi_2\Bigg[\begin{matrix}
			(1,1),&(1,1)\\\\
			(1,\alpha_1),&(1,\alpha_2)
		\end{matrix}\bigg|-u^\beta t_1^{\alpha_1}t_2^{\alpha_2}\Bigg],
	\end{equation*}
	where ${}_2\Psi_2$ is the generalized Wright function defined in (\ref{genwrit}).
\end{remark}
\begin{theorem}
	The distribution of STFPRF is given by
	\begin{equation}\label{stfprfdist}
		p_{\alpha_1,\alpha_2}^\beta(n,t_1,t_2)=\frac{(-1)^n}{n!}{}_3\Psi_3\Bigg[\begin{matrix}
			(1,\beta),&(1,1),&(1,1)\\\\
			(1-n,\beta),&(1,\alpha_1),&(1,\alpha_2)
		\end{matrix}\bigg|-t_1^{\alpha_1}t_2^{\alpha_2}\lambda^\beta\Bigg],\ n\ge0.
	\end{equation}
\end{theorem}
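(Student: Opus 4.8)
The plan is to start from the time-changed representation established in Theorem \ref{thm5}, namely $N_{\alpha_1,\alpha_2}^\beta(t_1,t_2)\overset{d}{=}N(H_\beta(T_{2\alpha_1}(t_1),T_{2\alpha_2}(t_2)))$, and to recover the distribution as the coefficients of the probability generating function (pgf). First I would compute the pgf $G_{\alpha_1,\alpha_2}^\beta(u,t_1,t_2)=\mathbb{E}u^{N(H_\beta(T_{2\alpha_1}(t_1),T_{2\alpha_2}(t_2)))}$ by conditioning on the value of $H_\beta(T_{2\alpha_1}(t_1),T_{2\alpha_2}(t_2))$ and using the pgf of the Poisson process, $\mathbb{E}u^{N(t)}=e^{\lambda t(u-1)}$. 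Since $e^{\lambda t(u-1)}=e^{-\lambda(1-u)t}$, this conditioning collapses the pgf into the Laplace transform of the composition $H_\beta(T_{2\alpha_1}(t_1),T_{2\alpha_2}(t_2))$ evaluated at the point $\lambda(1-u)$.

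At this point I would invoke the Laplace transform computed in Remark \ref{rem5}, which expresses $\mathbb{E}\exp(-uH_\beta(T_{2\alpha_1}(t_1),T_{2\alpha_2}(t_2)))$ as a ${}_2\Psi_2$ generalized Wright function. Substituting $\lambda(1-u)$ for the transform variable yields
\begin{equation*}
G_{\alpha_1,\alpha_2}^\beta(u,t_1,t_2)=\sum_{k=0}^{\infty}\frac{\Gamma(k+1)\big(-\lambda^\beta t_1^{\alpha_1}t_2^{\alpha_2}\big)^k(1-u)^{\beta k}}{\Gamma(k\alpha_1+1)\Gamma(k\alpha_2+1)}.
\end{equation*}
Equivalently, one can obtain the same series directly from the pgf \eqref{spt2pgf} of the SFPRF, $G_\beta(u,x_1,x_2)=\exp(-x_1x_2\lambda^\beta(1-u)^\beta)$, integrated against the densities of $T_{2\alpha_1}(t_1)$ and $T_{2\alpha_2}(t_2)$ using \eqref{tclap}; both routes are consistent because of the second equality in distribution in \eqref{sfprfrep}.

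The final step is to read off the coefficient of $u^n$. Here I would expand each factor $(1-u)^{\beta k}$ through the generalized binomial series $(1-u)^{\beta k}=\sum_{n=0}^{\infty}\frac{(\beta k)_n}{n!}(-u)^n$, where $(\beta k)_n=\Gamma(\beta k+1)/\Gamma(\beta k-n+1)$, and then interchange the order of the two summations. Extracting the coefficient of $u^n$ gives
\begin{equation*}
p_{\alpha_1,\alpha_2}^\beta(n,t_1,t_2)=\frac{(-1)^n}{n!}\sum_{k=0}^{\infty}\frac{\Gamma(\beta k+1)\Gamma(k+1)\big(-t_1^{\alpha_1}t_2^{\alpha_2}\lambda^\beta\big)^k}{\Gamma(\beta k-n+1)\Gamma(k\alpha_1+1)\Gamma(k\alpha_2+1)},
\end{equation*}
and comparing this $k$-series term by term with the defining series \eqref{genwrit} of the generalized Wright function — noting that the two $(1,1)$ numerator pairs contribute $\Gamma(k+1)^2$ against the $k!$ in \eqref{genwrit}, leaving a single $\Gamma(k+1)$ — identifies it with the stated ${}_3\Psi_3$. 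I expect the main technical point to be justifying the interchange of summation and the binomial expansion of $(1-u)^{\beta k}$ for non-integer exponent (valid for $|u|<1$, which suffices for a pgf), together with the careful bookkeeping of the Gamma factors, in particular the appearance of $\Gamma(1-n+\beta k)$ in the lower parameter $(1-n,\beta)$ via $(\beta k)_n$.
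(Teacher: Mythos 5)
Your proof is correct and reaches exactly the paper's coefficient series, but it organizes the machinery differently from the paper's own proof. The paper works at the pmf level: using the first representation in (\ref{sfprfrep}), it writes $p_{\alpha_1,\alpha_2}^\beta(n,t_1,t_2)$ as the SFPRF distribution (\ref{spt2dist}) integrated against the laws of $T_{2\alpha_1}(t_1)$ and $T_{2\alpha_2}(t_2)$, computes the double Laplace transform in $(t_1,t_2)$ term by term via (\ref{tclap}), and then inverts in $z_1,z_2$; the Gamma bookkeeping, $(\beta k)_n=\Gamma(\beta k+1)/\Gamma(\beta k+1-n)$ and $\Gamma(k+1)^2/k!=\Gamma(k+1)$, is identical to yours. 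You instead work at the pgf level through the second representation in (\ref{sfprfrep}): the pgf collapses to the Laplace transform of $H_\beta(T_{2\alpha_1}(t_1),T_{2\alpha_2}(t_2))$ evaluated at $\lambda(1-u)$, which Remark \ref{rem5} (stated before this theorem in the paper, so there is no circularity) already gives as a ${}_2\Psi_2$, and the pmf is recovered by binomially expanding $(1-u)^{\beta k}$ and collecting the coefficient of $u^n$ --- precisely the device the paper itself used to obtain the SFPRF pmf (\ref{spt2dist}) from the pgf (\ref{thm3pf0}). Your route is shorter and more modular, since all the transform-inversion work is delegated to Remark \ref{rem5}; the paper's route is self-contained and does not lean on that remark. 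The two arguments carry comparable analytic caveats --- your double-sum interchange for non-integer exponents versus the paper's term-by-term integration and Laplace inversion --- and both leave implicit that the resulting Wright series converges for all arguments only when $\alpha_1+\alpha_2>1$ (the convergence index of this ${}_3\Psi_3$ is $\alpha_1+\alpha_2-2$). Finally, your ``equivalently'' aside, integrating the SFPRF pgf (\ref{spt2pgf}) against the $T_{2\alpha_i}$ laws, is in substance the pgf-level shadow of the paper's pmf-level computation, so of your two suggested routes it is the one closest to the published proof.
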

\begin{proof}
	In view of (\ref{sfprfrep}) and by using (\ref{spt2dist}), the distribution of STFPRF is given by
	\begin{equation*}
		p_{\alpha_1,\alpha_2}^\beta(n,t_1,t_2)=\iint_{\mathbb{R}_+\times\mathbb{R}_+}\sum_{k=0}^{\infty}(-1)^{n+k}\frac{(\alpha k)_n(x_1x_2\lambda^\alpha)^k}{n!k!}\mathrm{Pr}\{T_{2\alpha_1}(t_1)\in\mathrm{d}x_1\}\mathrm{Pr}\{T_{2\alpha_2}(t_2)\in\mathrm{d}x_2\}.
	\end{equation*}
	Its double Laplace transform is
	\begin{align*}
	\iint_{\mathbb{R}_+\times\mathbb{R}_+}e^{-z_1t_1-z_2t_2}&p_{\alpha_1,\alpha_2}^\beta(n,t_1,t_2)\,\mathrm{d}t_1\,\mathrm{d}t_2\\
	&=\sum_{k=0}^{\infty}(-1)^{n+k}\frac{(\beta k)_n\lambda^{\beta k}}{n!k!}z_1^{\alpha_1-1}z_2^{\alpha_2-1}\int_{0}^{\infty}\int_{0}^{\infty}(x_1x_2)^ke^{-z_1^{\alpha_1}x_1}e^{-z_2^{\alpha_2}x_2}\,\mathrm{d}x_1\,\mathrm{d}x_2\\
	&=\sum_{k=0}^{\infty}(-1)^{n+k}\frac{(\beta k)_n\lambda^{\beta k}}{n!}\frac{k!z_1^{\alpha_1-1}z_2^{\alpha_2-1}}{z_1^{\alpha_1 k+\alpha_1}z_2^{\alpha_2 k+\alpha_2}},
	\end{align*}
	whose inverse Laplace transform with respect to $z_1$ and $z_2$ yields 
	\begin{equation*}
		p_{\alpha_1,\alpha_2}^\beta(n,t_1,t_2)=\frac{(-1)^n}{n!}\sum_{k=0}^{\infty}\frac{\Gamma(\beta k+1)\Gamma(k+1)\Gamma(k+1)}{\Gamma(\beta k+1-n)\Gamma(\alpha_1 k+1)\Gamma(\alpha_2 k+1)k!}(-t_1^{\alpha_1}t_2^{\alpha_2}\lambda^\beta)^k,\ n\ge0.
	\end{equation*}
	This completes the proof.
\end{proof}
\begin{remark}
	For $\beta=1$, (\ref{stfprfdist}) reduces to 
	\begin{align*}
		p_{\alpha_1,\alpha_2}(n,t_1,t_2)&=\frac{(-1)^n}{n!}\sum_{k=n}^{\infty}\frac{\Gamma( k+1)\Gamma(k+1)\Gamma(k+1)}{\Gamma(k+1-n)\Gamma(\alpha_1 k+1)\Gamma(\alpha_2 k+1)k!}(-t_1^{\alpha_1}t_2^{\alpha_2}\lambda)^k\\
		&=\sum_{k=n}^{\infty}\frac{(-1)^{n+k}(k)_n(k)_{k-n}}{\Gamma(\alpha_1 k+1)\Gamma(\alpha_2 k+1)}(t_1^{\alpha_1}t_2^{\alpha_2}\lambda)^k,\ n\ge0,
	\end{align*}
	which is the distribution of TFPRF (see \cite{Kataria2024}). 
	Also, it Laplace transform is
	\begin{align}
		\mathbb{E}e^{-uN_{\alpha_1,\alpha_2}(t_1,t_2)}&=\sum_{n=0}^{\infty}e^{-un}\sum_{k=n}^{\infty}\frac{(-1)^{n+k}(k)_n(k)_{k-n}}{\Gamma(\alpha_1 k+1)\Gamma(\alpha_2 k+1)}(t_1^{\alpha_1}t_2^{\alpha_2}\lambda)^k\nonumber\\
		&=\sum_{k=0}^{\infty}\frac{k!(-t_1^{\alpha_1}t_2^{\alpha_2}\lambda)^k}{\Gamma(\alpha_1 k+1)\Gamma(\alpha_2 k+1)}\sum_{n=0}^{k}\binom{k}{n}(-e^{-u})^n\nonumber\\
		&=\sum_{k=0}^{\infty}\frac{k!(-t_1^{\alpha_1}t_2^{\alpha_2}\lambda(1-e^{-u}))^k}{\Gamma(\alpha_1 k+1)\Gamma(\alpha_2 k+1)}\nonumber\\
		&={}_2\Psi_2\Bigg[\begin{matrix}
			(1,1),&(1,1)\\\\
			(1,\alpha_1),&(1,\alpha_2)
		\end{matrix}\bigg|-t_1^{\alpha_1}t_2^{\alpha_2}\lambda(1-e^{-u})\Bigg],\ u>0.\label{tfprflap}
	\end{align}
	Moreover, for $\alpha_1=\alpha_2=1$, (\ref{stfprfdist}) reduces to the distribution of SFPRF.
\end{remark}

Let $\{N_{\alpha_1,\alpha_2}(t_1,t_2),\ (t_1,t_2)\in\mathbb{R}^2_+\}$ be the TFPRF and $\{T_{2\alpha_i}(t_i),\ t_i\ge0\}$, $i=1,2$ be random process as defined in Theorem \ref{thm5}. Then,  $N_{\alpha_1,\alpha_2}(t_1,t_2)\overset{d}{=}N(T_{2\alpha_1}(t_1),T_{2\alpha_2}(t_2))$ (see \cite{Kataria2024}), where $\{N(t_1,t_2),\ (t_1,t_2)\in\mathbb{R}^2_+\}$ is the PRF. It is assumed that all these processes are independent of each other.

In the next result, we give an alternate time-changed representation of TFPRF using the one parameter Poisson process.
\begin{theorem}\label{thm54}
Let $\{N(t),\ t\ge0\}$ be a Poisson process with transition rate $\lambda>0$ and $\{T_{2\alpha_i}(t_i),\ t_i\ge0\}$, $i=1,2$ be random processes as defined in Theorem \ref{thm5}. Also, it is assumed that all these processes are independent of each other. Then, the following holds true for TFPRF: 
\begin{equation}\label{tfprfrep}
	N_{\alpha_1,\alpha_2}(t_1,t_2)\overset{d}{=}N(T_{2\alpha_1}(t_1)T_{2\alpha_2}(t_2)),\ (t_1,t_2)\in\mathbb{R}^2_+.
\end{equation}
\end{theorem}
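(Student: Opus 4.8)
The plan is to prove the equality in distribution by matching probability generating functions (pgf), in the same spirit as the proof of Theorem \ref{thm5}. Fix $(t_1,t_2)\in\mathbb{R}^2_+$ and condition on the independent values $T_{2\alpha_1}(t_1)=x_1$ and $T_{2\alpha_2}(t_2)=x_2$. Since $\{N(t),\ t\ge0\}$ is a one parameter Poisson process with rate $\lambda$, its pgf is $\mathbb{E}u^{N(t)}=e^{\lambda t(u-1)}$, so the pgf of the right-hand side of (\ref{tfprfrep}) is
\[\mathbb{E}u^{N(T_{2\alpha_1}(t_1)T_{2\alpha_2}(t_2))}=\iint_{\mathbb{R}_+\times\mathbb{R}_+}e^{\lambda x_1x_2(u-1)}\mathrm{Pr}\{T_{2\alpha_1}(t_1)\in\mathrm{d}x_1\}\mathrm{Pr}\{T_{2\alpha_2}(t_2)\in\mathrm{d}x_2\}.\]

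Next I would take the double Laplace transform in $t_1$ and $t_2$ and apply (\ref{tclap}); the resulting integral is precisely the one evaluated in Remark \ref{rem5} with $u^\beta$ there replaced by $\lambda(1-u)$. Carrying out the same two inversions—first integrate out $x_1$, invert in $z_1$ using (\ref{opmllap}) to produce a Mittag-Leffler factor, then expand it, integrate out $x_2$, and invert in $z_2$—yields
\[\mathbb{E}u^{N(T_{2\alpha_1}(t_1)T_{2\alpha_2}(t_2))}=\sum_{k=0}^{\infty}\frac{k!\,(-\lambda(1-u)t_1^{\alpha_1}t_2^{\alpha_2})^k}{\Gamma(\alpha_1k+1)\Gamma(\alpha_2k+1)}.\]
Writing $u=e^{-s}$, this is exactly the Laplace transform (\ref{tfprflap}) of the TFPRF, i.e.\ the pgf of $N_{\alpha_1,\alpha_2}(t_1,t_2)$. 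Uniqueness of the pgf then gives (\ref{tfprfrep}).

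A cleaner conceptual route, which I would record as well, bypasses the inversions entirely. The representation stated just before the theorem gives $N_{\alpha_1,\alpha_2}(t_1,t_2)\overset{d}{=}N(T_{2\alpha_1}(t_1),T_{2\alpha_2}(t_2))$ with the \emph{two} parameter PRF as the outer field. For deterministic $x_1,x_2\ge0$, the rectangular value of the PRF is Poisson with mean $\lambda x_1x_2$ by (\ref{prfdist}), while the one parameter process value $N(x_1x_2)$ is likewise Poisson with mean $\lambda x_1x_2$; hence these two agree pointwise. Conditioning both representations on the common independent pair $(T_{2\alpha_1}(t_1),T_{2\alpha_2}(t_2))$ and integrating against the same mixing law transfers the pointwise identity to the mixtures, which is (\ref{tfprfrep}).

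The only genuine content—and thus the step I regard as the crux rather than an obstacle—is the observation that the product time argument $x_1x_2$ fed into a one parameter Poisson process reproduces the rectangular mean $\lambda x_1x_2$ of the two parameter PRF. Once this coupling is identified, everything else is either the routine double-Laplace bookkeeping mirrored from Remark \ref{rem5} or the immediate mixing argument. I would present the mixing argument as the main proof for brevity and keep the pgf computation as an alternative consistency check.
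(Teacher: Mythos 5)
Both halves of your proposal are correct, but they have different relationships to the paper. Your pgf computation is essentially the paper's own proof: the paper conditions on $T_{2\alpha_1}(t_1)T_{2\alpha_2}(t_2)$ to write the Laplace transform of the right-hand side of (\ref{tfprfrep}) as $\mathbb{E}\exp\big(\lambda T_{2\alpha_1}(t_1)T_{2\alpha_2}(t_2)(e^{-u}-1)\big)$, then repeats the double-Laplace-transform computation of Remark \ref{rem5} with $-u^\beta$ there replaced by $\lambda(e^{-u}-1)$, matches the result with (\ref{tfprflap}), and invokes uniqueness; your version with pgf variable $u=e^{-s}$ is the identical calculation. The mixing argument that you propose to feature as the main proof, however, is genuinely different from what the paper does. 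It combines two facts available in the paper: the representation $N_{\alpha_1,\alpha_2}(t_1,t_2)\overset{d}{=}N(T_{2\alpha_1}(t_1),T_{2\alpha_2}(t_2))$ through the two-parameter PRF, quoted from \cite{Kataria2024} immediately before the theorem, and the elementary observation that for deterministic $x_1,x_2\ge 0$ both the PRF value $N(x_1,x_2)$ (by (\ref{prfdist})) and the one-parameter value $N(x_1x_2)$ are Poisson with mean $\lambda x_1x_2$. Since each outer process is independent of the pair $(T_{2\alpha_1}(t_1),T_{2\alpha_2}(t_2))$, integrating this pointwise identity against the common mixing law is legitimate and yields (\ref{tfprfrep}) with no transform inversions at all. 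What this buys is brevity and a transparent explanation of \emph{why} the product time change works; what it costs is self-containedness, since it delegates the analytic content to the cited representation of the TFPRF, whereas the paper's transform proof relies only on its own Remark \ref{rem5} and the expression (\ref{tfprflap}) derived in the preceding remark. Either route is sound; presenting the mixing argument as the main proof with the pgf computation as a consistency check is a reasonable, arguably cleaner, organization.
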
 
\begin{proof}
	The Laplace transform of the right hand side of (\ref{tfprfrep}) is given by
	\begin{align*}
		\mathbb{E}\exp(-uN(T_{2\alpha_1}(t_1)T_{2\alpha_2}(t_2)))&=\mathbb{E}(\mathbb{E}(\exp(-uN(T_{2\alpha_1}(t_1)T_{2\alpha_2}(t_2)))|T_{2\alpha_1}(t_1)T_{2\alpha_2}(t_2))\\
		&=\mathbb{E}\exp\big(\lambda T_{2\alpha_1}(t_1)T_{2\alpha_2}(t_2)(e^{-u}-1)\big),\ u>0.
	\end{align*}
	On following the proof in Remark \ref{rem5}, it can be shown that $\mathbb{E}\exp(-uN(T_{2\alpha_1}(t_1)T_{2\alpha_2}(t_2)))$ coincides with (\ref{tfprflap}). This completes the proof.
\end{proof}
\subsection{Space-time fractional CPRF with normal compounding} Let $Z_1,Z_2,\dots$ be iid standard normal random variables independent of the STFPRF $\{N_{\alpha_1,\alpha_2}^\beta(t_1,t_2),\ (t_1,t_2)\in\mathbb{R}^2_+\}$. Also, let 
\begin{equation}\label{stfcprf}
	\sum_{j=1}^{N_{\alpha_1,\alpha_2}^\beta(t_1,t_2)}Z_j,\ (t_1,t_2)\in\mathbb{R}^2_+,\ 0<\beta\leq1,\ 0<\alpha_i\leq1,\ i=1,2
\end{equation}
be the space-time fractional CPRF. Its Fourier transform
is given by
\begin{align*}
\mathbb{E}\exp&\bigg(-\frac{u^2N_{\alpha_1,\alpha_2}^\beta(t_1,t_2)}{2}\bigg)\\
&=\iint_{\mathbb{R}_+\times\mathbb{R}_+}\mathbb{E}\exp\bigg(-\frac{u^2N_\beta(x_1,x_2)}{2}\bigg)\mathrm{Pr}\{T_{2\alpha_1}(t_1)\in\mathrm{d}x_1\}\mathrm{Pr}\{T_{2\alpha_2}(t_2)\in\mathrm{d}x_2\},\ u\in\mathbb{R},
\end{align*}
where we have used the time-changed representation given in (\ref{sfprfrep}). Here, $\{N_\beta(t_1,t_2),\ (t_1,t_2)\in\mathbb{R}^2_+\}$ is the SFPRF and $\{T_{2\alpha_i}(t),\ t\ge0\}$, $i=1,2$ are processes as defined in Theorem \ref{thm5}.

On using (\ref{spt2pgf}), we get
\begin{align*}
\mathbb{E}\exp\bigg(-&\frac{u^2N_{\alpha_1,\alpha_2}^\beta(t_1,t_2)}{2}\bigg)\\
&=\iint_{\mathbb{R}_+\times\mathbb{R}_+}\exp\bigg(-x_1x_2\lambda^\beta(1-e^{-u^2/2})^\beta\bigg)\mathrm{Pr}\{L_{\alpha_1}(t_1)\in\mathrm{d}x_1\}\mathrm{Pr}\{L_{\alpha_2}(t_2)\in\mathrm{d}x_2\}\\
&={}_2\Psi_2\Bigg[\begin{matrix}
	(1,1),&(1,1)\\\\
	(1,\alpha_1),&(1,\alpha_2)
\end{matrix}\bigg|-\lambda^\beta(1-e^{-u^2/2})^\beta t_1^{\alpha_1}t_2^{\alpha_2}\Bigg].
\end{align*}
\subsubsection{Limiting result} Let $\{N_{\alpha_1,\alpha_2}^{\beta, (n^2)}(t_1,t_2),\ (t_1,t_2)\in\mathbb{R}^2_+\}$ be the STFPRF with parameter $n^2>0$. Then, the Fourier transform of the following scaled space-time fractional CPRF:
\begin{equation}\label{stfcprfs}
	n^{-1}\sum_{j=1}^{N_{\alpha_1,\alpha_2}^{\beta, (n^2)}(t_1,t_2)}Z_j,\ n>0,
\end{equation}
is given by
\begin{equation}\label{stfccprf}
	{}_2\Psi_2\Bigg[\begin{matrix}
		(1,1),&(1,1)\\\\
		(1,\alpha_1),&(1,\alpha_2)
	\end{matrix}\bigg|-n^{2\beta}(1-e^{-u^2n^{-2}/2})^\beta t_1^{\alpha_1}t_2^{\alpha_2}\Bigg],\ u\in\mathbb{R}.
\end{equation}
On letting $n\to\infty$, (\ref{stfccprf}) converges to
\begin{equation}\label{bscomp1lap}
	{}_2\Psi_2\Bigg[\begin{matrix}
		(1,1),&(1,1)\\\\
		(1,\alpha_1),&(1,\alpha_2)
	\end{matrix}\bigg|-(u^2/2)^\beta t_1^{\alpha_1}t_2^{\alpha_2}\Bigg],\ u\in\mathbb{R}.
\end{equation}

Let $\{B(t),\ t\ge0\}$ be the standard Brownian motion, and let $\{H_\beta(t_1,t_2),\ (t_1,t_2)\in\mathbb{R}^2_+\}$ be two parameter stable subordinator. Let us consider the following two parameter process:
\begin{equation}\label{tpsubcomp}
	B(H_\beta(T_{2\alpha_1}(t_1),T_{2\alpha_2}(t_2))),\ (t_1,t_2)\in\mathbb{R}^2_+.
\end{equation}
It is assumed that all the component processes in (\ref{tpsubcomp}) are independent of each other.

In view of Remark \ref{rem5}, the Fourier transform of (\ref{tpsubcomp}) is given by
\begin{align*}
	\mathbb{E}\exp(iuB(H_\beta(T_{2\alpha_1}(t_1),T_{2\alpha_2}(t_2))))&=\int_{0}^{\infty}e^{-u^2x/2}\mathrm{Pr}\{H_\beta(T_{2\alpha_1}(t_1),T_{2\alpha_2}(t_2))\in\mathrm{d}x\}\\
	&={}_2\Psi_2\Bigg[\begin{matrix}
		(1,1),&(1,1)\\\\
		(1,\alpha_1),&(1,\alpha_2)
	\end{matrix}\bigg|-(u^2/2)^\beta t_1^{\alpha_1}t_2^{\alpha_2}\Bigg],
\end{align*}
which coincides with (\ref{bscomp1lap}). Thus, the random field (\ref{stfcprfs}) converges weakly to (\ref{tpsubcomp}) for all $(t_1,t_2)\in\mathbb{R}^2_+$, as $n\to\infty$.
\begin{remark}
	For $\alpha_1=\alpha_2=1$, the space-time fractional CPRF (\ref{stfcprf}) reduces to the following space fractional CPRF:
	\begin{equation}\label{sfcprf}
		\sum_{j=1}^{N_\beta(t_1,t_2)}Z_j,\ (t_1,t_2)\in\mathbb{R}^2_+,\ 0<\beta\leq1,
	\end{equation}
	where $\{N_\beta(t_1,t_2),\ (t_1,t_2)\in\mathbb{R}^2_+\}$ is the space fractional PRF. Its Fourier transform is $\exp(-t_1t_2\lambda^\beta(1-u^2/2)^\beta)$, $u\in\mathbb{R}$. Also, if $\{N_\beta^{(n^2)}(t_1,t_2),\ (t_1,t_2)\in\mathbb{R}^2_+\}$ is a space fractional PRF with parameter $n^2>0$ then 
	\begin{equation*}
		\lim_{n\to\infty}n^{-1}\sum_{j=1}^{N_\beta^{(n^2)}(t_1,t_2)}Z_j\overset{d}{=}B(H_\beta(t_1,t_2)),\ (t_1,t_2)\in\mathbb{R}^2_+,
	\end{equation*}
	where the standard Brownian motion $\{B(t),\ t\ge0\}$ is independent of $\{H_\beta(t_1,t_2),\ (t_1,t_2)\in\mathbb{R}^2\}$.
\end{remark}
\begin{remark}
	From Theorem \ref{thm54}, it follows that
	\begin{equation}\label{c}
		\lim_{n\to\infty}n^{-1}\sum_{j=1}^{N_{\alpha_1,\alpha_2}^{\beta, (n^2)}(t_1,t_2)}Z_j\bigg|_{\beta=1}\overset{d}{=}B(T_{2\alpha_1}(t_1)T_{2\alpha_2}(t_2)),\ (t_1,t_2)\in\mathbb{R}^2_+,
	\end{equation}
	where all the processes appearing on the right hand side of (\ref{c}) are independent of each other.
\end{remark}
\section{Conclusion}
We introduced and studied some fractional compound Poisson random fields on $\mathbb{R}^2_+$. We obtained their distributions and the associated system of governing differential equations. In a particular case where the compounding random variables follow standard normal distribution, we examined the weak convergence of appropriately scaled versions of these fractional CPRFs. In this study, we are unable to establish the functional convergence of these processes. However, we intend to explore this aspect in future works.

\end{document}